\newtheorem{theorem}{Theorem}[section]
\newtheorem{lemma}[theorem]{Lemma}
\newtheorem{prop}[theorem]{Proposition}
\newtheorem{cor}[theorem]{Corollary}
\theoremstyle{definition}
\newtheorem{definition}[theorem]{Definition}
\theoremstyle{remark}
\numberwithin{equation}{section}
\def\Q{{\mathbb Q}}
\def\ad{{\operatorname{ad}}}
\def\CC{{\mathcal{C}}}
\def\EQ{{\mathcal{E}_{\mathbb Q}}}
\def\EV{{\mathcal{E}_{V}}}
\def\L{{\Lambda}}
\def\lcd{{{\sf lcd}}}
\def\lcdb{{\overline{\sf lcd}}}
\def\maru{{\nabla}}
\def\PP{{\mathcal{P}}}
\def\Par{{\underbar{\sf Par}}}
\def\Tree{{\underbar{\sf Tree}}}
\begin{document}
\title{The Lie algebra of rooted planar trees}
\author{Tomohiko Ishida}
\address{Graduate School of Mathematical Sciences \\
University of Tokyo, 3-8-1 Komaba \\
Meguro-ku, Tokyo 153-8914, Japan.}
\email{ishidat@ms.u-tokyo.ac.jp}
\author{Nariya Kawazumi}
\address{Department of Mathematical Sciences \\
University of Tokyo, 3-8-1 Komaba \\
Meguro-ku, Tokyo 153-8914, Japan.}
\email{kawazumi@ms.u-tokyo.ac.jp}
%\subjclass{Primary 18D50; Secondary 57R32}
\subjclass[2000]{Primary 18D50; Secondary 57R32}
\date{\today}
\keywords{nonsymmetric operad, polynomial vector field}

\begin{abstract}
We study a natural Lie algebra structure on the free vector 
space generated by all rooted planar trees as the associated 
Lie algebra of the nonsymmetric operad (non-$\Sigma$ operad, 
preoperad) of rooted planar trees.  
We determine whether the Lie algebra and some related Lie algebras 
are finitely generated or not, and prove that 
a natural surjection called the augmentation homomorphism 
onto the Lie algebra of polynomial vector fields on the line 
has no splitting preserving the units.
\end{abstract}

\maketitle

\section{Introduction}

The Lie algebra of polynomial vector fields on the line, 
$W_1 = \Q[x]\frac{d}{dx}$, and its Lie subalgebras 
$L_0 = x\Q[x]\frac{d}{dx}$ and $L_1 = x^2\Q[x]\frac{d}{dx}$
have been studied in the context of Gel'fand-Fuks theory. 
In particular, Goncharova \cite{G} computed the cohomology group
$H^\ast (L_1)$ completely. Based on her monumental work, various 
studies including \cite{FF} \cite{Mi} and \cite{V} have been 
developed. See also \cite{Is}, \cite{K1} and \cite{K2}. 
On the other hand, Kuno and the second author \cite{KK} discovered 
a Lie algebra structure on the free $\Q$-vector space generated 
by the set of all linear chord diagrams, $\mathcal{LC}$,
and a surjective homomorphism $\kappa: \mathcal{LC} \to L_0$. 
The Lie algebra $\mathcal{LC}$ is purely combinatorial and 
comes from the derivation Lie algebra of the tensor algebra 
of a symplectic vector space. So it seems to have no 
relation with Gel'fand-Fuks theory. \par

The link between the linear chord diagrams and the vector fields 
on the line is the notion of a nonsymmetric operad, or equivalently 
a non-$\Sigma$ operad or a preoperad. 
Kapranov and Manin \cite{KM} introduced 
a Lie algebra $\L(\PP)$ associated to a nonsymmetric operad of $\Q$-vector 
spaces $\PP$. To understand the homomorphism $\kappa$, we introduce 
the augmentation homomorphism of the Lie algebra induced from 
a nonsymmetric operad of sets. We denote $\PP = \Q\CC$, if
$\PP((m))$, $m \geq 0$, is the free $\Q$-vector space of 
$\CC((m))$ for a nonsymmetric operad of sets $\CC$. 
The augmentation maps
$\Q\CC((m)) \to \Q$ induce a natural homomorphism of Lie algebras
$\varepsilon: \L(\Q\CC) \to W_1$, which we call {\it the augmentation 
homomorphism}. The Lie algebra $\mathcal{LC}$ is regarded as 
the Lie algebra induced from an operad of sets, 
and the homomorphism 
$\kappa: \mathcal{LC} \to L_0$ is derived from the augmentation 
homomorphism. \par

In this paper we study two fundamental problems 
for some nonsymmetric operad of sets $\CC$;
\begin{enumerate}
\item[(i)] Is the Lie algebra $\L(\Q\CC)$ finitely generated?
\item[(ii)] Does the augmentation homomorphism have a splitting preserving 
the units $1 \in \CC((1))$ and $x\frac{d}{dx} \in L_0$?
\end{enumerate}

As typical examples of nonsymmetric operads of sets, 
we have the nonsymmetric operad of rooted planar trees $\Tree$
and its nonsymmetric suboperad of binary planar trees $\Tree_2$. 
We prove both of the questions for both of the nonsymmetric operads 
have negative answers 
(Theorems \ref{gen-tree}, \ref{tree-infinite} and \ref{main}).
In order to prove Theorem \ref{main}, we introduce 
the nonsymmetric operad of partitions 
$\Par$ and its nonsymmetric suboperad $\Par_2$ of binary partitions. 
The answers of (i) and (ii) for $\Par$ and that of (ii) for $\Par_2$ 
are negative
(Theorems \ref{partition-infinite} and \ref{main}), 
while that of (i) for $\Par_2$ 
is affirmative (Theorem \ref{gen-par}). Here it should be remarked 
Loday and Ronco \cite{LR} have already studied algebraic structures 
on binary rooted planar trees in a different way from ours. 
The answer of the question (i) for the Lie algebra $\mathcal{LC}$ 
is negative \cite{KK}, while that of (ii) is still open. \par

In this paper we work over the rationals $\Q$, but all the results 
hold true over any field of characteristic zero. 
An operad without assuming the symmetric 
group action has various names; a non-$\Sigma$ operad \cite{MSS}, 
a preoperad \cite{KP}, an asymmetric operad, and a nonsymmetric 
operad \cite{LV}. For details, see \cite{KP}.  
As will be shown in this paper, the notion of an operad 
without assuming the symmetric group action is quite fundamental.
In this paper we adopt {\it a nonsymmetric operad} 
following \cite{LV}. 
\par

\vskip 5pt

\noindent \textbf{Acknowledgments.}
The authors wish to express their gratitude to
Yusuke Kuno for valuable discussions, and 
to Jean-Louis Loday for careful comments to the first version
of this paper. The second-named author also thanks 
Ralph Kaufmann and Robert Penner for helpful advices. 
The first-named
author is supported by JSPS Research Fellowships
for Young Scientists (23$\cdot$1352).
The second-named author is partially supported 
by the Grant-in-Aid for Scientific Research (A) (No.20244003) 
from the Japan Society for Promotion of Sciences.

\section{The Lie algebra associated to a nonsymmetric operad}

We begin by recalling the definition of a {\it nonsymmetric operad} or 
a {\it non-$\Sigma$ operad} or a {\it preoperad} of $\mathbb{Q}$-vector
spaces. 
\begin{definition}
A sequence of $\Q$-vector spaces $\PP = \{\PP((m))\}_{m\geq 0}$ is 
a nonsymmetric operad of $\Q$-vector spaces, if it admits an element 
$1 \in \PP((1))$ called {\it the unit} and $\Q$-linear maps called 
{\it the composition maps}
$$
\gamma = \gamma^\PP: \PP((k))\otimes\PP((j_1))\otimes\cdots\otimes\PP((j_k)) 
\to \PP((\sum^k_{s=1}j_s)), \quad k \geq 1, j_s \geq 0, 
$$
which satisfy the following two conditions. 
\begin{enumerate}
\item {\rm (Associativity)} For any $c \in \PP((k))$, $d_s \in \PP((j_s))$, 
$1 \leq s \leq k$, and $e_t \in \PP((i_t))$, $1 \leq t \leq j = 
\sum^k_{s=1}j_s$, we have 
$$
\gamma(\gamma(c\otimes d_1\otimes\cdots\otimes d_k)\otimes 
e_1\otimes\cdots\otimes e_j) 
= \gamma(c\otimes f_1\otimes\cdots\otimes f_k),
$$
where $f_s = \gamma(d_s\otimes e_{j_1+\cdots+j_{s-1}+1}\otimes\cdots\otimes
e_{j_1+\cdots+j_{s-1}+j_s})$. 
\item {\rm (Unit)} We have $\gamma(1\otimes d) = d$ and 
$\gamma(c\otimes 1^{\otimes k}) = c$ for any $d \in \PP((j))$ and 
$c \in \PP((k))$, 
$k \geq 1$. 
\end{enumerate}
\end{definition}

As usual, we denote
$$
c\circ_sd_s := \gamma(c\otimes 1^{\otimes(s-1)}\otimes 
d_s\otimes 1^{\otimes (k-s)})
\in \PP((k+j_s-1)), \quad 1 \leq s \leq k.
$$
\par

A nonsymmetric operad of sets $\CC$ is defined in a similar way. 
For any $c \in \CC((k))$, $d_s \in \CC((j_s))$, 
$1 \leq s \leq k$, we denote the composition by 
$\gamma(c; d_1,\dots, d_s) \in \CC((\sum^k_{s=1}j_s))$. 
Then we denote by 
$\Q\CC$ the nonsymmetric operad of $\Q$-vector spaces defined by 
$$
(\Q\CC)((m)) := \Q(\CC((m))),
$$
the free $\Q$-vector space generated by the set $\CC((m))$, $m \geq 0$. \par

For any $\Q$-vector space $V$, the endomorphism operad $\EV$ is defined by 
$$
\EV(m) := \operatorname{Hom}(V^{\otimes m}, V)
$$
with the obvious unit and composition maps. The augmentation maps of the 
free $\Q$-vector space $\Q\CC((m))$, $\varepsilon: \Q\CC((m)) \to \Q = 
\operatorname{Hom}(\Q^{\otimes m}, \Q) = \EQ(m)$, 
$\sum_{x \in \CC((m))}a_xx \mapsto 
\sum_{x \in \CC((m))}a_x$, define a homomorphism of 
nonsymmetric operads of $\Q$-vector spaces
$$
\varepsilon: \Q\CC \to \EQ,
$$
which we call the augmentation homomorphism. \par

Kapranov and Manin \cite{KM} define two Lie algebras associated to
an operad of $\Q$-vector spaces. One requires the symmetric 
group action, but the other denoted by 
$$
\L(\PP) := \bigoplus^\infty_{m=0}\PP((m))
$$
can be defined for any nonsymmetric operad of $\Q$-vector spaces, $\PP$. 
See also \cite{LV} 5.3.16 and 5.8.17. 
The Lie bracket $[c,d]$, $c \in \PP((k))$, $d \in \PP((j))$, is 
defined by 
$$
[c,d] := \sum^j_{t=1}d\circ_tc - \sum^k_{s=1}c\circ_sd \in \PP((k+j-1)).
$$
Here it should be remarked our sign convention is different from that 
in \cite{KM}, in order to make the bijection $\L(\EQ)\overset\cong\to W_1 
:= \Q[x]\frac{d}{dx}$ stated below an isomorphism of Lie algebras. \par
To check the Jacobi identity of $\L(\PP)$, we write simply
$$
c(d) := \sum^j_{t=1}d\circ_tc.
$$
Then the map
$$
\delta: \L(\PP) \to \operatorname{End}(\L(\PP)), \quad 
c \mapsto (\delta_c: d \mapsto c(d))
$$
is injective since $\delta_c(1) = c$. 
One computes $\delta_{[c,d]} = [\delta_c, \delta_d] \in 
\operatorname{End}(\L(\PP))$. $\L(\PP)$ inherits the Jacobi 
identity from the Lie algebra $\operatorname{End}(\L(\PP))$
by the injection $\delta$. Here we remark the Lie algebra $\L(\PP)$ 
has a finer structure, a {\it pre-Lie algebra}. 
For details, see \cite{KM} 1.7 and \cite{LV} 5.8.17.
\par

For a finite dimensional $\Q$-vector space $V$, we have a natural 
isomorphism of Lie algebras onto the derivation Lie algebra of $T(V^*)$
\begin{equation}
\L(\EV) = \operatorname{Der}(T(V^*)),
\label{2der}
\end{equation}
where $T(V^*) = \bigoplus^\infty_{m=0}(V^*)^{\otimes m}$ is the 
tensor algebra of the dual space $V^* = \operatorname{Hom}(V, \Q)$. 
In order to describe the isomorphism (\ref{2der}) explicitly 
for the case $V = \Q$, we denote the element corresponding to 
$1 \in \Q = \operatorname{Hom}(\Q^{\otimes m}, \Q)$ by $1_m 
\in \EQ(m)$, $m \geq 0$. Then we have 
$$
[1_m, 1_n] = (n-m)1_{m+n-1}.
$$
This means the map given by 
$$
1_m \in \L(\EQ) \mapsto x^m\frac{d}{dx} \in W_1
$$
is an isomorphism onto the Lie algebra of polynomial vector 
fields on the line, $W_1 = \Q[x]\frac{d}{dx}$, 
For the rest of this paper we identify $\L(\EQ) = W_1$ 
through this isomorphism. \par

Thus, for any nonsymmetric operad of sets $\CC$, the augmentation 
homomorphism $\varepsilon: \Q\CC \to \EQ$ induces 
a natural homomorphism of Lie algebras
$$
\varepsilon: \L(\Q\CC) \to \L(\EQ) = W_1,
$$
which we call also {\it the augmentation homomorphism}. 
If $\CC((m)) \neq \emptyset$ for each $m \geq 0$, 
it is surjective. It is natural to ask 
whether it does split or not. The answer to 
this question should describe the complexity 
of the given nonsymmetric operad $\CC$. \par

As usual, we denote $L_k := x^{k+1}\Q[x]\frac{d}{dx}$ 
for any $k \geq -1$, which is a Lie subalgebra of $W_1$. 
Similarly we denote 
$$
\L_k(\PP) := \bigoplus^\infty_{m=k+1}\PP((m)),
$$
which is also a Lie subalgebra of $\L(\PP)$. 
The augmentation homomorphism induces 
a homomorphism of Lie algebras
$$
\varepsilon:\L_k(\Q\CC) \to L_k
$$
for each $k \geq -1$. \par

We denote by $e_0 = {e_0}^\PP \in \L(\PP)$ the unit 
$1 \in \PP((1))$ regarded as an element of the Lie algebra
$\L(\PP)$. When $\PP = \EQ$, we have ${e_0}^\EQ = x\frac{d}{dx}
\in W_1$. For any $m \geq 0$, the subspace $\PP((m)) \subset
\L(\PP)$ is exactly the $(m-1)$-eigenspace of the adjoint 
action of the unit, $\ad e_0$. Hence, for any nonsymmetric operads
of $\Q$-vector spaces $\PP$ and $\PP'$, if a homomorphism 
of Lie algebras $\varphi: \L(\PP) \to \L(\PP')$, 
which is not necessarily the induced homomorphism of a 
homomorphism of nonsymmetric operads, preserves the units 
$\varphi({e_0}^\PP) = {e_0}^{\PP'}$, then we have 
$\varphi(\PP((m))) \subset \PP'((m))$ for any $m \geq 0$.\par
In view of the action of $e_0$ we find out the center
$Z(\L(\PP))$ satisfies
\begin{equation}
Z(\L(\PP)) \subset Z(\L_0(\PP)) \subset Z(\PP((1))).
\end{equation}

Here we regard $\PP((1))$ as a Lie subalgebra of $\L(\PP)$. 
The standard chain complex $C_*(\L(\PP))$ of the Lie algebra 
$\L_k(\PP)$, $k \geq -1$, is decomposed into the eigenspaces
of the adjoint action $\ad e_0$. The $l$-eigenspace of $\ad e_0$, 
$C_*(\L_k(\PP))_{(l)}$ is a subcomplex of 
$C_*(\L_k(\PP))$. We denote 
$$
H_*(\L_k(\PP))_{(l)} := H_*(C_*(\L_k(\PP))_{(l)}). 
$$
Clearly we have 
$$
H_*(\L_k(\PP)) = \bigoplus^\infty_{l=k}H_*(\L_k(\PP))_{(l)}.
$$
The formula $\ad e_0 = d\circ(e_0\wedge) + (e_0\wedge)\circ d$ 
on the standard chain complex implies 
$$
H_*(\L_k(\PP)) = H_*(\L_k(\PP))_{(0)}
$$
for $k = -1$ or $0$. In particular, if a nonsymmetric operad of sets 
$\CC$ satisfies the condition $\sharp\CC((0)) = \sharp\CC((1)) 
= \sharp\CC((2)) = 1$, then we have $C_*(\L(\Q\CC))_{(0)}
= C_*(W_1)_{(0)} = C_*(sl_2(\Q))_{(0)}$, so that 
\begin{equation}
H_*(\L(\Q\CC)) = H_*(W_1) = H_*(sl_2(\Q)) = 
\begin{cases}
\Q, & \mbox{if $* = 0, 3$,}\\
0, & \mbox{otherwise.}
\end{cases}
\label{2sl_2}
\end{equation}
Similarly, if $\sharp\CC((1)) = 1$, then 
\begin{equation}
H_*(\L_0(\Q\CC)) = H_*(L_0) = 
\begin{cases}
\Q, & \mbox{if $* = 0, 1$,}\\
0, & \mbox{otherwise.}
\end{cases}
\label{2u_1}
\end{equation}
\par

We conclude this section by a comment on the Lie algebra of 
linear chord diagrams, $\mathcal{LC}$, introduced by 
Kuno and the second author \cite{KK}. 
We denote the free $\Q$-vector space generated by the set of 
linear chord diagrams of $m$ chords, $m \geq 1$, by $\lcd(2m-1)$, 
while we define $\lcd(2m) = 0$. The $j$-th amalgamation of two 
linear chord diagrams $C$ and $C'$, $C\ast_jC'$, defined in 
\cite{KK}, gives a composition map on $\lcd = \{\lcd(n)\}_{n \geq 0}$. 
In a similar way to \cite{KK}, we can prove $\lcd$ is an anticyclic 
operad. The Lie algebra $\mathcal{LC}$ is exactly $\L(\lcd)$. 
What we have stated in this section is a straight-forward 
generalization of some of observations in \cite{KK}. 
As a nonsymmetric operad, we have $\lcd = \Q\lcdb$, 
where $\lcdb$ is the operad of sets consisting of all linear 
chord diagrams. The homomorphism $\kappa: \mathcal{LC} \to L_0$ 
in \cite{KK} is the composite of the augmentation homomorphism and 
the homomorphism
$$
x\Q[x^2]\frac{d}{dx} \to L_0 = x\Q[x]\frac{d}{dx}, \quad
x^{2n+1}\frac{d}{dx} \mapsto 2x^{n+1}\frac{d}{dx}.
$$
For an anticyclic operad $\PP$, we denote by $\L^+(\PP)$ 
the cyclic invariants in $\L(\PP)$. 
One can prove $\L^+(\PP)$ is a Lie subalgebra of $\L(\PP)$. 
If $\PP=\lcd$, the Lie algebra $\L^+(\PP)$ is exactly 
the Lie algebra of (circular) chord diagrams $\mathcal{C}$
introduced in \cite{KK}.

\section{The nonsymmetric operad of rooted planar trees}

We recall the definition of the nonsymmetric operad of rooted planar trees, 
$\Tree$, following Markl, Shnider and Stasheff \cite{MSS} I.1.5.
Let $\Tree((m))$ be the set of planar trees with $1$ root at the 
bottom and $m$ leaves at the top, regarded as labeled from left 
to right; $1$ through $m$. For $S \in \Tree((m))$, $T \in \Tree((n))$ 
and $1 \leq i \leq m$, $S\circ_iT$ is defined to be the tree obtained by 
grafting the root of $T$ to the $i$-th leaf of $S$. This operation 
makes the sequence $\Tree := \{\Tree((m))\}_{m \geq 1}$ 
a nonsymmetric operad of 
sets, which we call {\it the nonsymmetric operad of rooted planar trees}. 
It is known $\Tree$ is a free nonsymmetric operad. See \cite{LV} 5.8.6
and \cite{MSS} II.1.9. \par
For $n \geq 2$, we denote by $\Tree_n((m))$ the subset of $\Tree((m))$ 
consisting of trees all of whose vertices are of valency $\leq n+1$. 
The sequence $\Tree_n := \{\Tree_n((m))\}_{m \geq 1}$ is a nonsymmetric 
suboperad of $\Tree$. We call it {\it the nonsymmetric operad of 
$n$-ary rooted planar trees}. 
As is known, each element of the set $\Tree((m))$ corresponds 
to a meaningful way of inserting one set of parentheses into the 
word $12\cdots m$, that is, a cell of the Stasheff associahedron 
$K_m$ \cite{S}. For example, $\Tree((1)) = \{1\}$, $\Tree((2)) = \{(12)\}$, 
and $\Tree((3)) = \{((12)3), (1(23)), (123)\}$. The set $\Tree_2((m))$ 
corresponds exactly to the vertices of the associahedron $K_m$.
From (\ref{2u_1}) we have 
$$
H_*(\L(\Q\Tree)) = H_*(\L(\Q\Tree_n)) = H_*(L_0). 
$$ 
\par

Now we introduce an enhancement of the nonsymmetric operad $\Tree$. 
To do this, we consider the $i$-th face $\partial_ic$ of $c \in 
\Tree((m))$ for $m \geq 2$ and $1\leq i \leq m$, defined by 
erasing the $i$-th leaf of $c$. For example, $\partial_i((12)3)
= \partial_i(1(23)) = (12)$, $\partial_i((1(23))4) = ((12)3)$ 
for $1 \leq i\leq 3$, and $\partial_4((1(23))4) = (1(23))$. 
Let $\Tree^-((0))$ be a singleton, whose unique element we denote 
by $\maru$. We define $\Tree^-((m)) := \Tree((m))$ for $m \geq 1$, 
$\partial_11 := \maru$, and $c\circ_i\maru := \partial_ic$ 
for $c \in \Tree((m))$, $m \geq 1$ and $1 \leq i \leq m$. 
Then $\Tree^-:= \{\Tree^-((m))\}_{m \geq 0}$ forms a nonsymmetric operad 
of sets. For $n\geq 2$, the sequence $\Tree^-_n 
:= \{\Tree_n^-((m))\}_{m \geq 0}$, 
given by $\Tree^-_n((0)) = \Tree^-((0))$ and $\Tree^-_n((m)) = \Tree_n((m))$ 
for $m\geq 1$, is a nonsymmetric suboperad of $\Tree^-$. 
From (\ref{2sl_2}) we have 
$$
H_*(\L(\Q\Tree^-)) = H_*(\L(\Q\Tree^-_n)) = H_*(W_1). 
$$ 
Clearly we have $\partial_i\partial_jc= \partial_{j-1}\partial_ic$ 
if $i < j$. Hence the linear map
$$
\partial:= \sum^m_{i=1}\partial_i: 
\Q\Tree^-((m)) \to \Q\Tree^-((m-1))
$$
satisfies $\partial\partial = 0$, so that $\Q\Tree^-((*)) = 
\{\Q\Tree^-((m)), \partial\}_{m\geq 0}$ is a chain complex, and 
$\Q\Tree^-_n((*)) = \{\Q\Tree^-_n((m)), \partial\}_{m\geq 0}$ 
a subcomplex. Consider the tree $(12) \in \Tree^-_2((2))$. 
Then we have $\partial((12)\circ_2c) = c - (12)\circ_2\partial c$
for any $c \in \Tree^-((m))$, $m \geq 0$. This implies the vanishing
of the homology groups
$$
H_*(\Q\Tree^-((*))) = H_*(\Q\Tree^-_n((*))) = 0.
$$

\section{The nonsymmetric operad of partitions}

In this section we introduce the nonsymmetric operad of partitions, 
$\Par$. \par

Let $\Q[x_i; i \geq 1]$ be the rational polynomial ring in infinitely 
many indeterminates $\{x_i\}_{i \geq 1}$. A monomial 
$\prod^N_{i=1}{x_i}^{a_i}$ with $N \geq 2$, $a_i \geq 1$ 
($1 \leq \forall i \leq N$), 
corresponds to the nontrivial order-preserving partition of the set 
$\{1,2,\dots, m\}$ with $m= \sum^N_{i=1}a_i$ given by 
$$
\{1,2,\dots, m\} = \coprod^N_{i=1}\{\sum^{i-1}_{j=1}a_j + 1, 
\sum^{i-1}_{j=1}a_j + 2, 
\dots, \sum^{i-1}_{j=1}a_j + a_i\}.
$$
For $m\geq 2$, we define 
$$
\Par((m)) := \left\{\prod^N_{i=1}{x_i}^{a_i}; N \geq 2, a_i \geq 1 
(1 \leq \forall i \leq N),
\,\mbox{and}\, \sum a_i = m\right\},
$$
which is regarded as the set of nontrivial order-preserving partitions 
of the set $\{1,2,\dots, m\}$. The composition map is defined by 
$$
\gamma\left(\prod^N_{i=1}{x_i}^{a_i}; \prod^{N_1}_{k=1}{x_k}^{a_{1k}}, 
\dots, \prod^{N_m}_{k=1}{x_k}^{a_{mk}}\right)
:= \prod^N_{i=1}{x_i}^{b_i},
$$
where $$
b_i = \sum^{a_1+\cdots+a_{i-1}+a_i}_{j=a_1+\cdots+a_{i-1}+1}
\left(\sum^{N_j}_{k=1}a_{jk}\right).
$$
In other words, we define 
$$
\left(\prod^N_{i=1}{x_i}^{a_i}\right)\circ_s
\left(\prod^{N_s}_{k=1}{x_k}^{a_{sk}}\right)
:= {x_l}^{a_l-1+\sum^{N_s}_{k=1}a_{sk}}\prod_{i\neq l}{x_i}^{a_i}
$$
if $a_1+\cdots+a_{l-1}+1 \leq s \leq a_1+\cdots+a_{l-1}+a_l$. 
It is easy to check this composition satisfies the axiom of 
associativity. But there does not exist a unit in the polynomial 
ring $\Q[x_i; i \geq 1]$. In fact, $\left(\prod^N_{i=1}{x_i}^{a_i}\right)
\circ_jx_1 = \prod^N_{i=1}{x_i}^{a_i}$, but
$x_1\circ_1\left(\prod^N_{i=1}{x_i}^{a_i}\right) = {x_1}^{\sum a_i}$.  
So we define $\Par((1))$ to be a singleton, 
whose unique element we denote by $1$, and 
$$
\left(\prod^N_{i=1}{x_i}^{a_i}\right)\circ_j1 =
1\circ_1\left(\prod^N_{i=1}{x_i}^{a_i}\right) := \prod^N_{i=1}{x_i}^{a_i}.
$$
Then $\Par := \{\Par((m))\}_{m \geq 1}$ forms a nonsymmetric operad of sets, 
which we call {\it the nonsymmetric operad of partitions}. 
For $n \geq 2$, we denote $\Par_n((1)) := \Par((1)) = \{1\}$ and 
$$
\Par_n((m)) := \Par((m))\cap  \Q[x_1, x_2,\dots, x_n]
$$
for $m \geq 2$. Then $\Par_n := \{\Par_n((m))\}_{m \geq 1}$ 
is a nonsymmetric suboperad of $\Par$. We call it 
{\it the nonsymmetric operad of $n$-ary partitions}. \par

The reason why we introduce the nonsymmetric operad $\Par$ is to simplify 
the Lie algebra $\L(\Q\Tree)$ by using the following homomorphism 
$\nu: \Tree\to \Par$. \par
Let $c$ be a rooted planar tree in $\Par((m))$, $m \geq 2$. 
Look at the nearest vertex to the root. 
Each edge except the one attached to the root has the set of leaves 
sitting above itself. Hence the tree $c$ gives a nontrivial order-preserving 
partition of the set of leaves $\{1,2,\dots, m\}$, which we denote by 
$\nu(c) \in \Par((m))$. For example, $\nu((1(23))4) = {x_1}^3x_2$, 
$\nu((12)(34)) = {x_1}^2{x_2}^2$. Further we define $\nu(1) := 1 \in
\Par((1))$.  Then the maps $\nu: \Tree((m)) \to \Par((m))$, $m \geq 1$, form a
homomorphism of nonsymmetric operads
$$
\nu: \Tree \to \Par
$$
from the definition of the composition maps in $\Par$. Clearly it induces 
a homomorphism of nonsymmetric operads
$$
\nu: \Tree_n \to \Par_n
$$
for each $n \geq 2$. \par

To compute the Lie bracket on $\L_1(\Q\Par)$, we regard the polynomial ring 
$\Q[x_i; i \geq 1]$ as an $L_0$-module by the diagonal action. 
More precisely, $\xi(x)\frac{d}{dx} \in L_0$ acts on $f(x_1, x_2, \dots) 
\in \Q[x_i; i \geq 1]$ by 
$$
\left(\xi(x)\frac{d}{dx}\right)(f(x_1, x_2, \dots))
= \sum^\infty_{i=1}\xi(x_i)\frac{\partial}{\partial x_i}f(x_1, x_2, \dots).
$$

Then it is easy to prove the following.
\begin{lemma}\label{3bracket}
For $c, d \in \L_1(\Q\Par) \subset \Q[x_i; i \geq 1]$ we have 
$$
[c,d] = \varepsilon(c)(d) - \varepsilon(d)(c) 
\in \L_1(\Q\Par) \subset \Q[x_i; i \geq 1].
$$ 
\end{lemma}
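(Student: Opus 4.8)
The plan is to check the identity when $c$ and $d$ are single partitions (monomials) and then extend by bilinearity; since both the bracket and the right-hand side are bilinear, this reduction costs nothing. So write $c = \prod_i x_i^{a_i} \in \Par((k))$ with $k = \sum_i a_i \geq 2$ and $d = \prod_i x_i^{b_i} \in \Par((j))$ with $j = \sum_i b_i \geq 2$. By the definition of the bracket we have $[c,d] = c(d) - d(c)$ with $c(d) = \sum_{t=1}^j d\circ_t c$, and the augmentation sends $c \mapsto \varepsilon(c) = x^k\frac{d}{dx}$. Hence it suffices to establish the single identity $c(d) = \varepsilon(c)(d)$; the corresponding identity $d(c) = \varepsilon(d)(c)$ then follows by symmetry, and subtracting the two yields the lemma.

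The crucial observation I would isolate first is that the composition $d\circ_t c$ in $\Par$ remembers only the degree $k$ of $c$ together with the block of the partition $d$ into which the index $t$ falls, and is otherwise insensitive to the shape of $c$. Indeed, reading off the composition formula for $\Par$, if $t$ lies in the $l$-th block of $d$, i.e. $b_1 + \cdots + b_{l-1} + 1 \leq t \leq b_1 + \cdots + b_{l-1} + b_l$, then
$$d\circ_t c = x_l^{\,b_l - 1 + k}\prod_{i\neq l} x_i^{b_i} = x_l^{\,k-1}\,d.$$
Since exactly $b_l$ of the indices $t \in \{1, \dots, j\}$ lie in the $l$-th block, summing over $t$ collapses the block sums and gives
$$c(d) = \sum_{t=1}^j d\circ_t c = \sum_l b_l\, x_l^{\,k-1}\,d.$$

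On the other side, differentiating $d = \prod_i x_i^{b_i}$ gives $\frac{\partial}{\partial x_i} d = b_i x_i^{b_i - 1}\prod_{i'\neq i} x_{i'}^{b_{i'}}$, so the diagonal action produces
$$\varepsilon(c)(d) = \sum_i x_i^{\,k}\frac{\partial}{\partial x_i} d = \sum_i b_i\, x_i^{\,k-1}\,d,$$
which is exactly the expression just obtained for $c(d)$, the terms with $b_i = 0$ being absent from both sides. This proves $c(d) = \varepsilon(c)(d)$, and the lemma follows; one checks along the way that each $x_i^{\,k-1} d$ is again a genuine partition of degree $\geq 2$, so the bracket indeed lands in $\L_1(\Q\Par)$. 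I do not expect a real obstacle here: the only step demanding care is the first computation, namely verifying that the exponent shift in the composition formula is $b_l - 1 + k$ and that it is constant across a block. This is precisely the mechanism by which the purely operadic bracket degenerates into the elementary vector field $x^k\frac{d}{dx}$ acting diagonally; once it is pinned down, the remainder is a direct comparison of two monomial sums.
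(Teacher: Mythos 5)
Your proof is correct. The paper itself omits the argument (the lemma is introduced with ``it is easy to prove''), and your computation is precisely the intended direct verification: reducing by bilinearity to monomials, observing from the composition formula that $d\circ_t c = x_l^{\,k-1}d$ depends only on $\deg c = k$ and the block $l$ containing $t$, so that $c(d)=\sum_l b_l x_l^{\,k-1}d$ coincides with the diagonal action of $\varepsilon(c)=x^k\frac{d}{dx}$ on $d$, with the signs matching the paper's convention $[c,d]=c(d)-d(c)$.
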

As a corollary, we obtain
\begin{cor}\label{3abel}
The kernel of the augmentation homomorphism
$\varepsilon: \L(\Q\Par) =\L_0(\Q\Par) \to \L_0(\EQ) = L_0$ is abelian. 
\end{cor}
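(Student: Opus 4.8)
The plan is to reduce everything to Lemma~\ref{3bracket}, whose hypothesis requires both arguments to lie in $\L_1(\Q\Par) = \bigoplus_{m\geq 2}\Q\Par((m))$. So the first thing I would do is locate the kernel $K := \ker\varepsilon$ inside this subalgebra. (Note that $\Par((0))$ is empty, so indeed $\L(\Q\Par) = \L_0(\Q\Par)$, as the statement records.)

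First I would observe that $\varepsilon$ respects the grading by arity: since the augmentation sends each generator of $\Par((m))$ to $1_m \in \EQ(m)$, it maps $\Q\Par((m))$ into $\Q\, 1_m = \Q\, x^m\frac{d}{dx}$. Equivalently, $\varepsilon$ is compatible with the $\ad e_0$-eigenspace decomposition, because $\Par((m))$ is the $(m-1)$-eigenspace on both sides. Hence $K$ splits as $K = \bigoplus_{m\geq 1}\bigl(K\cap\Q\Par((m))\bigr)$. On the bottom piece $\Par((1)) = \Q\cdot 1$ the map $\varepsilon$ is injective, sending the unit $1$ to $e_0 = x\frac{d}{dx}\neq 0$; therefore $K\cap\Q\Par((1)) = 0$ and
$$
K \subset \bigoplus_{m\geq 2}\Q\Par((m)) = \L_1(\Q\Par).
$$

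With this inclusion in hand the conclusion is immediate. Take any $c, d \in K$. Since $c, d \in \L_1(\Q\Par)$, Lemma~\ref{3bracket} gives $[c,d] = \varepsilon(c)(d) - \varepsilon(d)(c)$ inside $\Q[x_i; i\geq 1]$. But $c$ and $d$ lie in $\ker\varepsilon$, so $\varepsilon(c) = \varepsilon(d) = 0$ as vector fields in $L_0$, and consequently both diagonal actions $\varepsilon(c)(d)$ and $\varepsilon(d)(c)$ vanish. Hence $[c,d] = 0$, and $K$ is abelian.

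The only content beyond Lemma~\ref{3bracket} is the localization of $K$ in $\L_1(\Q\Par)$, and the one point that genuinely needs care is checking that no part of the kernel survives in arity $1$; this is exactly where injectivity of $\varepsilon$ on the one-dimensional space $\Q\Par((1))$ is used. Once that is settled, the bracket formula does all the work, so I do not expect any real obstacle here.
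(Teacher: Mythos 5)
Your proof is correct and is essentially the argument the paper intends when it derives the corollary from Lemma~\ref{3bracket}: the kernel lies in $\L_1(\Q\Par)$ (since $\varepsilon$ is graded and injective on $\Q\Par((1))$, where $1\mapsto x\frac{d}{dx}$), and then the bracket formula $[c,d]=\varepsilon(c)(d)-\varepsilon(d)(c)$ vanishes on kernel elements. You have merely made explicit the localization step that the paper leaves implicit, which is a reasonable bit of care rather than a deviation.
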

The Lie bracket on $\L_1(\Q\Par)$ extends to the Laurent polynomial ring 
in infinitely many indeterminates $\Q[{x_i}^{\pm1}; i \geq 1]$, and 
makes it a Lie algebra.

%%%%%%%%%%%%%%%%%%%%%%%%%%%%%%%%%%%%%%%%%%%%%%%%%%%%%%%%%%%%%%%%%%%%%%%
%%%%%%%%%%%%%%%%%%%%%%%%%%%%%%%%%%%%%%%%%%%%%%%%%%%%%%%%%%%%%%%%%%%%
%%%%%%%%%%%%%%%%%%%%%%%%%%%%%%%%%%%%%%%%%%%%%%%%%%%%%%%%%%%%%%%%%%%%%%%
%%%%%%%%%%%%%%%%%%%%%%%%%%%%%%%%%%%%%%%%%%%%%%%%%%%%%%%%%%%%%%%%%%%%

\section{The Lie algebra $\Lambda (\Q\Tree _2^-)$ is not finitely generated}
In this section, we prove the following theorem.
\begin{theorem}\label{gen-tree}
The Lie algebra $\Lambda(\Q\Tree _2^- )$ is not finitely generated. 
\end{theorem}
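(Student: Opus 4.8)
The goal is to show $\L(\Q\Tree_2^-)$ is not finitely generated as a Lie algebra. My strategy is to exploit the eigenspace grading by $\ad e_0$: since $\Tree_2^-((m)) \subset \L(\Q\Tree_2^-)$ is the $(m-1)$-eigenspace of $\ad e_0$, any Lie bracket of a homogeneous element of degree $m-1$ with one of degree $n-1$ lands in degree $m+n-2$, i.e.\ in $\Q\Tree_2^-((m+n-1))$. Thus the whole Lie algebra is graded, and I would measure finite generation by comparing dimensions. The key inequality to establish is that the dimension of $\Tree_2^-((m))$ grows strictly faster than what brackets of a finite generating set could produce.

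\textbf{Key steps.}
First I would record the dimensions: $\Tree_2((m))$ is the set of binary rooted planar trees with $m$ leaves, whose cardinality is the Catalan number $C_{m-1} = \frac{1}{m}\binom{2m-2}{m-1}$, and $\Tree_2^-((0))$ is a singleton. So $\dim \Q\Tree_2^-((m)) = C_{m-1}$ for $m \geq 1$, which grows like $4^m/m^{3/2}$. Second, I would suppose for contradiction that a finite set of generators exists; since the Lie algebra is graded by $\ad e_0$, I may assume the generators are homogeneous, and all lie in degrees below some bound $m_0$, i.e.\ in $\bigoplus_{m=0}^{m_0}\Q\Tree_2^-((m))$. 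Third, I would bound from above the dimension of the degree-$m$ part of the Lie subalgebra generated by such a set. Every element of degree $m$ is a linear combination of iterated brackets of the generators whose degrees sum appropriately; because each bracket $[c,d]$ of degrees $a$ and $b$ gives degree $a+b$ (after the shift, an element of $\Q\Tree_2^-((a+b+1))$ when $c,d$ have arities summing suitably), the number of such iterated brackets producing a given large arity $m$ is controlled by the number of ways to write $m$ as an ordered combination of the bounded generator arities. I would argue this count, together with the bounded dimension of each generator space, grows at most exponentially with a base strictly smaller than $4$ — or more carefully, grows too slowly to match $C_{m-1}$.

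\textbf{The main obstacle.}
The hard part will be making the upper bound on the dimension of the generated subalgebra in each arity genuinely rigorous, since naively iterated brackets proliferate and one must not overcount. The cleanest route is probably not a crude dimension count but rather to identify a concrete infinite family of trees in $\Tree_2^-((m))$, one in each large arity, that cannot be expressed as brackets of lower-arity elements. Concretely, I expect the elements of maximal arity are dangerous, so I would look instead at what the bracket does to the highest face-degeneracy structure, or use the augmentation $\varepsilon \colon \L(\Q\Tree_2^-) \to W_1$ together with the fact (from Corollary \ref{3abel} applied via $\nu$) that the kernel's structure is tightly controlled in the partition model. In other words, I would push the problem forward along $\nu\colon \Tree_2 \to \Par_2$, reduce to counting in the polynomial ring $\Q[x_1,x_2]$ where brackets are given explicitly by Lemma \ref{3bracket}, and exhibit degree-$m$ monomials (binary partitions of $m$) not reachable from finitely many lower-degree ones. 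That reduction to an explicit derivation-type bracket on a polynomial ring is where I expect the decisive estimate to live, and where the combinatorial bookkeeping must be done carefully.
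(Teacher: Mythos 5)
Your proposal has a decisive gap, and it sits exactly where you locate your ``decisive estimate.'' The fallback route --- pushing the problem along $\nu\colon \Tree_2 \to \Par_2$, computing brackets in $\Q[x_1,x_2]$ via Lemma \ref{3bracket}, and exhibiting binary-partition monomials unreachable from finitely many lower-degree ones --- cannot work, because the paper's Theorem \ref{gen-par} proves that $\Lambda_1(\Q\Par_2)$ \emph{is} finitely generated, by $x_1x_2$, $x_1^2x_2$, $x_1x_2^2$ and $x_1^3x_2+x_1x_2^3$: for arity $\geq 5$ every element of $\Q\Par_2((m))$ lies in the derived ideal, so there are no unreachable monomials to exhibit. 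Non-finite-generation of $\Lambda(\Q\Tree_2^-)$ is precisely a phenomenon that dies in the quotient $\Par_2$; a surjection transfers non-finite-generation only from target up to source (as the paper does for the full $\Tree$ via $\Par$ in Corollary \ref{tree-infinite}), never the other way. Your primary route is also unsound as stated: counting iterated bracket words from a finite homogeneous generating set gives growth at least like (number of binary bracketings) $\times$ (choices of letters), which already matches or exceeds $4^m$, so it yields no upper bound below the Catalan number; worse, since $\maru \in \Tree_2^-((0))$ has $\ad e_0$-degree $-1$, once $\maru$ is available there are infinitely many bracket words in each fixed degree, and your claim that the count ``is controlled by the number of ways to write $m$ as an ordered combination of the bounded generator arities'' is false. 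You never engage with this degree $-1$ element, which is the whole difficulty of the $\Tree_2^-$ case as opposed to $\Tree_2$.

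What is missing is (a) the correct dimension count and (b) a mechanism to neutralize the generators of arity $0$ and $1$. For (a), the paper bounds not iterated bracket words but the span of \emph{all single brackets} landing in a fixed degree: the image of the eigenspace component $\delta\colon C_2(\Lambda_1(\Q\Tree_2))_{(m)} \to C_1(\Lambda_1(\Q\Tree_2))_{(m)}$ of the chain boundary has dimension at most $c'(m)\leq \frac{1}{2}\sum_{l=1}^{m-1}c(l)c(m-l)$, and the Catalan recurrence together with $c(m+1)/c(m)=2(2m+1)/(m+2)$ gives $c'(m)\leq \frac{m-1}{m+2}c(m)<c(m)$, whence $H_1(\Lambda_1(\Q\Tree_2))_{(m)}\neq 0$ for every $m$ (Lemma \ref{catalan}); this immediately kills finite generation of $\Lambda_1(\Q\Tree_2)$ because the derived subalgebra contains, in every degree above the generator bound, the degree-$m$ piece of any subalgebra generated in lower degrees. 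For (b), the paper's Lemma \ref{h_m} shows that $\Q\Tree_2^-((0))\oplus\Q\Tree_2^-((1))\oplus{\mathfrak h}_m$ is already a Lie subalgebra, where ${\mathfrak h}_m$ is generated by arities $2,\dots,m$: the key computation is that $\ad\maru$ acts as a derivation on iterated brackets $f_c(u_1,\dots,u_n)$, lowering each argument of arity $\geq 3$ within the generating range, while an argument $u_i=(12)$ yields $(\ad\maru)((12))=2\cdot 1$, whose insertion collapses $f_c$ to a multiple of $f_{\partial_i c}$. Without a substitute for this lemma, a finite generating set containing $\maru$ escapes every grading-based count you set up; and without the inequality $c'(m)<c(m)$ (or an equivalent $H_1$ computation) you have no elements unreachable by brackets at all.
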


As a preliminary of the proof of Theorem \ref{gen-tree}, 
we show Lemma \ref{catalan} and Lemma \ref{h_m}.

%%%%%%%%%%%%%%%%%%%%%%%%%%%%%%%%%%%%%%%%%%%%%%%%%%%%%%%%%%%%%%%%%%%%%%
%%%%%%%%%%%%%%%%%%%%%%%%%%%%%%%%%%%%%%%%%%%%%%%%%%%%%%%%%%%%%%%%%%%%%

\begin{lemma}\label{catalan}
For any $m\geq 2$, 
$$
H_1( \Lambda _1(\Q\Tree _2))_{(m)}\neq 0.
$$
\end{lemma}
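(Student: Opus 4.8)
The plan is to identify $H_1(\Lambda_1(\Q\Tree_2))_{(m)}$ with a cokernel and then defeat surjectivity by a dimension count governed by Catalan numbers. Since the Chevalley--Eilenberg differential $C_1 \to C_0$ of Lie algebra homology vanishes, the first homology is the cokernel of the bracket map $d_2\colon \wedge^2\Lambda_1(\Q\Tree_2) \to \Lambda_1(\Q\Tree_2)$, $c\wedge d \mapsto [c,d]$. Because $\ad e_0$ acts additively under the bracket, $d_2$ preserves eigenvalues, so restricting to the $m$-eigenspace yields
$$
H_1(\Lambda_1(\Q\Tree_2))_{(m)} = \operatorname{coker}\bigl(d_2\colon (\wedge^2\Lambda_1(\Q\Tree_2))_{(m)} \to \Q\Tree_2((m+1))\bigr),
$$
using that the $m$-eigenspace of $\Lambda_1(\Q\Tree_2)$ in homological degree one is exactly $\Q\Tree_2((m+1))$. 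Hence it suffices to prove that this bracket map is not surjective.

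Next I would compute the two dimensions. Writing $C_j$ for the $j$-th Catalan number, the binary planar trees give $\dim \Q\Tree_2((m+1)) = C_m$, the target dimension. For the source, decompose $\Lambda_1(\Q\Tree_2) = \bigoplus_{j\geq1}\Q\Tree_2((j+1))$, where the $j$-th summand carries $\ad e_0$-eigenvalue $j$ and has dimension $C_j$. The $m$-eigenspace of $\wedge^2\Lambda_1(\Q\Tree_2)$ is then spanned by wedges of trees whose eigenvalues $p,q\geq1$ satisfy $p+q=m$, so its dimension is at most $\tfrac12\sum_{p+q=m,\ p,q\geq1} C_p C_q$. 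By the Catalan convolution $\sum_{j=0}^{m}C_j C_{m-j} = C_{m+1}$, after removing the two boundary terms $C_0 C_m = C_m$, this upper bound equals $\tfrac12(C_{m+1} - 2C_m)$.

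Finally I would compare the two quantities: it remains to check $\tfrac12(C_{m+1}-2C_m) < C_m$, that is $C_{m+1} < 4C_m$. This follows from the closed ratio $C_{m+1}/C_m = \dfrac{2(2m+1)}{m+2} = 4 - \dfrac{6}{m+2} < 4$, valid for every $m\geq0$. Thus for all $m\geq2$ the source of the bracket map has strictly smaller dimension than its target, so the map cannot be surjective and its cokernel is nonzero, giving $H_1(\Lambda_1(\Q\Tree_2))_{(m)} \neq 0$. The only delicate points are the bookkeeping of the eigenvalue grading and the strictness of the inequality for every $m$; the essential phenomenon is that the Catalan numbers grow at a rate strictly below $4$, so the quadratic (bracket) part can never fill up the linear part.
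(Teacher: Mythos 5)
Your proposal is correct and takes essentially the same approach as the paper: both identify $H_1(\L_1(\Q\Tree_2))_{(m)}$ with the cokernel of the grading-preserving bracket map $C_2 \to C_1$ and defeat surjectivity by bounding the source dimension by $\tfrac12\bigl(c(m+1)-2c(m)\bigr)$ via the Catalan convolution identity, then invoking the ratio $c(m+1)/c(m)=\tfrac{2(2m+1)}{m+2}$. Your final inequality $C_{m+1}<4C_m$ is algebraically the same as the paper's conclusion $c'(m)\leq \tfrac{m-1}{m+2}\,c(m)<c(m)$, so the two arguments differ only in presentation.
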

\begin{proof}
The cardinality of $\Tree _2((m+1))$ is the $m$-th Catalan number
\[ c(m)=\frac{1}{m+1}\binom{2m}{m}, \]
and it coincides with the dimension of $C_1(\Lambda _1\Tree _2)_{(m)}$.
Let $c'(m)$ denote the dimension of the second chain complex 
$C_2(\Lambda _1\Tree _2)_{(m)}$.
\par

We prove that $c'(m)<c(m)$ for any $m\geq 1$.
Since $c'(m)$ can be computed from the equation
\[ c'(m)=\begin{cases}\displaystyle\sum _{l=1}^k c(l)c(m-l) & 
\text{(if }m=2k+1\text{)} \\
\displaystyle\sum _{l=1}^{k-1} c(l)c(m-l)+\binom{c(k)}{2} & 
\text{(if }m=2k\text{)} \end{cases}, \]
we have the inequality
\[ c'(m)\leq \frac{1}{2}\sum _{l=1}^{m-1} c(l)c(m-l). \]
By the well-known recurrence equation
\[ c(m)=\sum _{l=0}^{m-1} c(l)c(m-l-1), \]
we have 
\begin{align*}
c'(m)+c(m)\leq \frac{1}{2}\sum _{l=0}^m c(l)c(m-l) =\frac{1}{2}c(m+1).
\end{align*}
Since the ratio of consecutive Catalan numbers is described as
\[ \frac{c(m+1)}{c(m)}=\frac{2(2m+1)}{m+2}, \]
we obtain finally
\begin{align*}
c'(m)\leq \frac{m-1}{m+2}c(m) <c(m). 
\end{align*}
Therefore, $H_1(\L_1(\Q\Tree_2))_{(m)}$ does not vanish for any $m\geq 1$.
\end{proof}

\begin{cor}
The Lie subalgebra $\Lambda _1(\Q\Tree _2)$ of $\Lambda(\Q\Tree _2^-)$ 
is not finitely generated. 
\end{cor}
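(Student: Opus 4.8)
The plan is to deduce this from Lemma \ref{catalan} by the standard identification of the first Lie algebra homology with the abelianization. Set $\mathfrak{g} := \L_1(\Q\Tree_2)$. First I would recall that for trivial coefficients the Chevalley--Eilenberg differential $\partial_1$ vanishes and $\partial_2\colon \Lambda^2\mathfrak{g} \to \mathfrak{g}$ is $x\wedge y\mapsto [x,y]$, so that $H_1(\mathfrak{g}) \cong \mathfrak{g}/[\mathfrak{g},\mathfrak{g}]$. Thus it suffices to prove that the abelianization $\mathfrak{g}/[\mathfrak{g},\mathfrak{g}]$ is infinite-dimensional over $\Q$.

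Next I would record the elementary fact that a finitely generated Lie algebra has finite-dimensional abelianization: if $g_1,\dots,g_r$ generate $\mathfrak{g}$, then every element of $\mathfrak{g}$ is a $\Q$-linear combination of iterated brackets in the $g_i$, and all such brackets die in $\mathfrak{g}/[\mathfrak{g},\mathfrak{g}]$; hence the images of $g_1,\dots,g_r$ span the abelianization and $\dim_\Q \mathfrak{g}/[\mathfrak{g},\mathfrak{g}] \leq r$. So to prove the corollary it is enough to exhibit that this abelianization is infinite-dimensional.

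Finally I would invoke the grading by $\ad e_0$. Since $\mathfrak{g} = \bigoplus_{m\geq 2}\Q\Tree_2((m))$ with $\Q\Tree_2((m))$ the $(m-1)$-eigenspace of $\ad e_0$, the bracket preserves the eigenspace decomposition (as $\ad e_0$ is a derivation of the bracket), and therefore so does $\partial_2$; passing to homology yields $H_1(\mathfrak{g}) = \bigoplus_m H_1(\mathfrak{g})_{(m)}$. By Lemma \ref{catalan} the summand $H_1(\mathfrak{g})_{(m)}$ is nonzero for every $m\geq 2$, so $H_1(\mathfrak{g})$ is an infinite direct sum of nonzero spaces, hence infinite-dimensional; combined with the previous step, $\mathfrak{g}$ cannot be finitely generated.

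I do not expect a genuine obstacle here: the corollary is a soft homological consequence of Lemma \ref{catalan}, all of whose computational content (the Catalan-number dimension count) is already in place. The only point meriting a line of care is the compatibility of the $\ad e_0$-eigenspace decomposition with passage to $H_1$, i.e.\ that $H_1(\mathfrak{g})_{(m)}$ really is the degree-$m$ piece of $\mathfrak{g}/[\mathfrak{g},\mathfrak{g}]$; this follows at once from $\partial_2$ being $\ad e_0$-equivariant.
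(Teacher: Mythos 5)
Your proof is correct and is essentially the argument the paper intends: the corollary is stated as an immediate consequence of Lemma \ref{catalan}, via the identification $H_1(\mathfrak{g})\cong\mathfrak{g}/[\mathfrak{g},\mathfrak{g}]$ and the $\ad e_0$-grading, and the same pattern (finite generation would force $H_1(\L_1(\Q\Tree_2))_{(l)}=0$ in all sufficiently high degrees, contradicting the lemma) is spelled out in the paper's proof of Theorem \ref{gen-tree}. Your explicit verification that the grading descends to $H_1$ is a reasonable point of care but introduces nothing beyond the paper's route.
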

To prove Theorem \ref{gen-tree}, 
we define ${\mathfrak h}_m$ to be the Lie subalgebra of 
$\Lambda(\Q\Tree _2^-)$  generated by $\bigcup_{j=2}^m \Tree_2((j))$.
%%%%%%%%%%%%%%%%%%%%%%%%%%%%%%%%%%%%%%%%%%%%%%%%%%%%%%%%%%%%%%%%%%%
%%%%%%%%%%%%%%%%%%%%%%%%%%%%%%%%%%%%%%%%%%%%%%%%%%%%%%%%%%%%%%%%%%%%%%%%
\begin{lemma}\label{h_m}
The vector subspace $\Q\Tree _2^-((0))\oplus\Q\Tree _2^-((1))
\oplus{\mathfrak h}_m$ is a Lie subalgebra of $\Lambda (\Q\Tree_2^-)$.
\end{lemma}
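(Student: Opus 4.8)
Set $V:=\Q\Tree_2^-((0))\oplus\Q\Tree_2^-((1))\oplus{\mathfrak h}_m$, write $\maru$ for the generator of $\Q\Tree_2^-((0))$, and recall that the unit $e_0=1$ spans $\Q\Tree_2^-((1))$. The plan is to verify directly that the bracket of any two of these three summands lands back in $V$. Three of the cases are immediate: $[{\mathfrak h}_m,{\mathfrak h}_m]\subset{\mathfrak h}_m$ because ${\mathfrak h}_m$ is a Lie subalgebra by definition; $[e_0,e_0]=0$; and, since ${\mathfrak h}_m$ is generated by the arity-homogeneous sets $\Tree_2((j))$, it is a sum of eigenspaces of $\ad e_0$ and hence $\ad e_0$-stable, giving $[e_0,{\mathfrak h}_m]\subset{\mathfrak h}_m$. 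A short computation with the bracket formula also settles the two cases involving only $\maru$ and $e_0$: one gets $[\maru,\maru]=0$ (the result would lie in the empty space $\Tree_2^-((-1))$) and $[\maru,e_0]=e_0\circ_1\maru=\partial_1 1=\maru\in V$.

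This leaves the essential case $[\maru,{\mathfrak h}_m]\subset V$. The first observation is that for any $d\in\Tree_2^-((j))$ with $j\ge 1$ the bracket formula collapses to $[\maru,d]=\sum_{t=1}^{j}d\circ_t\maru=\partial d$, so $\ad\maru$ is exactly the face operator $\partial=\sum_i\partial_i$ on the part of positive arity. Next I record the behaviour of $\partial$ on the generators: erasing a leaf of a binary tree and contracting keeps the tree binary, so $\partial$ maps $\Q\Tree_2((j))$ into $\Q\Tree_2((j-1))$. For $3\le j\le m$ this lands in the span of a generating set of ${\mathfrak h}_m$, hence in ${\mathfrak h}_m$, and for $j=2$ it lands in $\Q\Tree_2((1))=\Q e_0\subset V$; thus $\partial$ carries the generating space $\bigcup_{j=2}^m\Tree_2((j))$ into $\Q e_0\oplus{\mathfrak h}_m\subset V$.

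To pass from the generators to all of ${\mathfrak h}_m$ I would use that $\partial=\ad\maru$ is a derivation of $\L(\Q\Tree_2^-)$, so $\partial[x,y]=[\partial x,y]+[x,\partial y]$, and induct on the bracket depth, filtering ${\mathfrak h}_m$ by the subspaces spanned by iterated brackets of at most $r$ generators. The step I expect to be the main obstacle is that $\partial$ does \emph{not} preserve ${\mathfrak h}_m$ tree by tree: a bracket of two generators has large arity, and $\partial$ applied to an individual high-arity binary tree generically leaves the proper subspace ${\mathfrak h}_m$. The Leibniz rule is what saves this, since it rewrites $\partial$ of a bracket as a sum of brackets, which stay inside ${\mathfrak h}_m$ (or inside $\Q e_0\oplus{\mathfrak h}_m$) by the cases already handled. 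A second, more delicate point is a potential circularity: when $\partial x$ or $\partial y$ has a component along $\maru$, the Leibniz rule regenerates a term $[\maru,\,\cdot\,]=\partial(\,\cdot\,)$ of the very type under analysis. I would remove this by noting that every element of ${\mathfrak h}_m$ is a combination of trees with at least two leaves, so its image under $\partial$ has no arity-zero part and therefore no $\maru$-component; the offending term never appears, and the induction closes. Assembling the cases shows $V$ is closed under the bracket, proving the lemma.
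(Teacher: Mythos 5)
Your proof is correct and takes essentially the same route as the paper's: both reduce the lemma to showing that $\ad 1$ and $\ad\maru$ carry ${\mathfrak h}_m$ into $\Q\Tree_2^-((1))\oplus{\mathfrak h}_m$, using that $\ad\maru$ acts as the face operator $\partial$ on positive arity and pushing this through iterated brackets via the Jacobi/Leibniz rule, with the unit's diagonal action absorbing the terms where a binary generator degenerates to $2\cdot 1$. The only cosmetic difference is that the paper packages your induction on bracket depth as the closed formula $(\ad v)f_c(u_1,\dots,u_n)=\sum_{i=1}^n f_c(u_1,\dots,(\ad v)u_i,\dots,u_n)$.
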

\begin{proof}
It is obvious that the subspace $\Q\Tree _2^-((0))\oplus\Q\Tree_2^-((1))$ 
is a Lie subalgebra of $\Lambda (\Q\Tree _2^1)$.
Hence it is sufficient to prove that the inclusions
$$ 
(\ad 1)({\mathfrak h}_m), (\ad\maru )({\mathfrak h}_m)\subset \Q\Tree
_2^-((1))\oplus{\mathfrak h}_m 
$$
hold.\par
Now we consider an arbitrary Lie algebra ${\mathfrak g}$.
For any $n$ elements $u_1, u_2, \dots , u_n$ of ${\mathfrak g}$ 
and a binary tree $c$ which belongs to $\Tree _2((n))$,
we define $f_c(u_1, u_2, \dots , u_n)$ to be an element 
of ${\mathfrak g}$ obtained from $u_1, u_2, \dots , u_n$ 
by the Lie bracket 
following the parentheses corresponding to $c$. 
For example, $f_{(1)}(u_1)=u_1, f_{((12)(34))}(u_1, u_2, u_3, u_4)=
[[u_1, u_2], [u_3, u_4]]$.
By the Jacobi's identity, the equation
\begin{equation}\label{f_c}
(\ad v)f_c(u_1, u_2, \dots , u_n)=\sum _{i=1}^nf_c(u_1, u_2,  
\dots , u_{i-1}, (\ad v)u_i, u_{i+1}, \dots , u_n)
\end{equation}
holds for any $v\in{\mathfrak g}$.
\par

Under these settings, the Lie algebra ${\mathfrak h}_m$ is 
the vector subspace spanned by the set 
$$
\{ f_c(u_1, u_2, \dots , u_n); n\geq 1, 
c\in \Tree _2((n)), u_i\in\Tree _2((j_i)), 2\leq j_i\leq m\}.
$$
Hence the assertion holds if we prove 
that $f_c(u_1, u_2,  \dots , u_{i-1}, (\ad v)u_i, u_{i+1}, \dots , u_n)$ 
is in $\Q\Tree _2((1))\oplus{\mathfrak h}_m$ for any $u_i$'s and 
$v=1$ and $\maru$. 
\par
Since $(\ad 1)(u)=ju$ for any $u\in \Tree_2((j))$, 
the claim holds true for $v=1$.
\par

In the case $n=1$, $f_c(u_1)=u_1$ and $(\ad\maru)(u_1)$ 
is in $\mathbb{Q}\Tree _2((j_1-1))$.
In the case $n\geq 2$, if $j_i\geq 3$, then $(\ad\maru)(u_i)$ 
is in $\mathbb{Q}\Tree _2((j_i-1))$.
If $j_i=2$, then $u_i=(12)$ and $(\ad\maru)((12))=2\cdot 1$ and
\begin{align*}
f_c(u_1, u_2, \dots , u_{i-1}, (\ad\maru)(12), u_{i+1}, \dots , u_n)
&=2f_c(u_1, u_2, \dots, u_{i-1}, 1, u_{i+1}, \dots , u_n) \\
&=Cf_{\partial _ic}(u_1, u_2, \dots , u_{i-1}, u_{i+1}, \dots , u_n)
\end{align*}
for some integer $C$.
Here $\partial_ic$ is the $i$-th face of $c$ defined in \S3. 
The claim holds true also for $v=\maru$.
\end{proof}
%%%%%%%%%%%%%%%%%%%%%%%%%%%%%%%%%%%%%%%%%%%%%%%%%%%%%%%%%%%%%%%%%%%%%
%%%%%%%%%%%%%%%%%%%%%%%%%%%%%%%%%%%%%%%%%%%%%%%%%%%%%%%%%%%%%%%%%%%%%%
\begin{proof}[Proof of Theorem \ref{gen-tree}]
Assume that $\Lambda(\Q\Tree_2^-)$ is finitely generated. 
Then there exists a sufficiently large $m\geq 2$ so that 
$\Lambda(\Q\Tree _2^-)$ is generated by $\bigoplus_{j=0}^m \Q\Tree_2((j))$.
In other words, $\Lambda(\Q\Tree_2^-)$ has the decomposition
$$ 
\Lambda(\Q\Tree_2^-)=\Q\Tree_2((0))\oplus \Q\Tree_2((1))\oplus{\mathfrak
h}_m.
$$
In particular, if $l>m$, the inclusion
$$ 
{\mathfrak h}_m\cap \Q\Tree_2((l))\subset [\Lambda_1(\Q\Tree_2^-), 
\Lambda_1(\Q\Tree_2^-)] 
$$
holds.
This implies that $H_1(\Lambda_1(\Q\Tree_2))_{(l-1)}=0$ and it
contradicts Lemma \ref{catalan}. This concludes the proof of Theorem
\ref{gen-tree}.
\end{proof}
%%%%%%%%%%%%%%%%%%%%%%%%%%%%%%%%%%%%%%%%%%%%%%%%%%%%%%%%%%%%%%%%
%%%%%%%%%%%%%%%%%%%%%%%%%%%%%%%%%%%%%%%%%%%%%%%%%%%%%%%%%%%%%%%%%%%%%%%%%%%
%%%%%%%%%%%%%%%%%%%%%%%%%%%%%%%%%%%%%%%%%%%%%%%%%%%%%%%%%%%%%%%
%%%%%%%%%%%%%%%%%%%%%%%%%%%%%%%%%%%%%%%%%%%%%%%%%%%%%%%%%%%%%%%%%%%%%%%%%%%%
\section{The Lie algebra $\Lambda(\Q\Par_2)$ is finitely generated}

In this section, we prove the following theorem.
\begin{theorem}\label{gen-par}
The Lie algebra $\Lambda_1(\Q\Par_2)$ is generated 
by $x_1x_2, x_x^2x_2, x_1x_2^2$, and
$x_1^3x_2+x_1x_2^3$. 
\end{theorem}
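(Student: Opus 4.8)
The plan is to exploit the structure revealed by Lemma \ref{3bracket} and Corollary \ref{3abel}. On $\L_1(\Q\Par_2) = x_1x_2\Q[x_1,x_2]$ (the span of the monomials $x_1^ax_2^b$ with $a,b\geq 1$) the bracket is $[c,d] = \varepsilon(c)(d) - \varepsilon(d)(c)$, where the augmentation $\varepsilon\colon\L_1(\Q\Par_2)\to L_1$ sends every monomial of total degree $m$ to $x^m\frac{d}{dx}$ and $L_1$ acts diagonally. Write $\mathfrak{a}$ for the Lie subalgebra generated by the four proposed elements and $\mathfrak{k} = \ker\varepsilon$; as in Corollary \ref{3abel}, $\mathfrak{k}$ is abelian. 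Since $\varepsilon(x_1x_2) = x^2\frac{d}{dx}$ and $\varepsilon(x_1^2x_2) = x^3\frac{d}{dx}$ already generate $L_1$ (indeed $[x^2\frac{d}{dx},x^3\frac{d}{dx}] = x^4\frac{d}{dx}$, and so on), we have $\varepsilon(\mathfrak{a}) = L_1$; because $\varepsilon$ is surjective with kernel $\mathfrak{k}$, it therefore suffices to prove the single containment $\mathfrak{k}\subseteq\mathfrak{a}$.

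The key simplification is that $\mathfrak{k}$ is naturally an $L_1$-module and $\mathfrak{a}\cap\mathfrak{k}$ is an $L_1$-submodule. Indeed, for $x\in\mathfrak{k}$ and any $b$ one has $[b,x] = \varepsilon(b)(x)$, since $\varepsilon(x) = 0$; thus the adjoint action on $\mathfrak{k}$ factors through $\varepsilon$, and as $\varepsilon(\mathfrak{a}) = L_1$ the subspace $\mathfrak{a}\cap\mathfrak{k}$ is stable under the diagonal $L_1$-action. Hence it is enough to produce $L_1$-module generators of $\mathfrak{k}$ lying inside $\mathfrak{a}$. I would take $z := x_1^2x_2 - x_1x_2^2$ and $w := x_1^3x_2 + x_1x_2^3 - 2x_1^2x_2^2$; both lie in $\mathfrak{a}\cap\mathfrak{k}$, since $z$ is the difference of two generators and $w = (x_1^3x_2+x_1x_2^3) - 2x_1^2x_2^2$ with $x_1^2x_2^2 = \tfrac12\left([x_1x_2,x_1^2x_2]+[x_1x_2,x_1x_2^2]\right)\in\mathfrak{a}$, while $\varepsilon(z)=\varepsilon(w)=0$.

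To organize the module computation I would use the involution $\sigma\colon x_1\leftrightarrow x_2$, which commutes with the diagonal $L_1$-action, and split $\mathfrak{k} = M^-\oplus\mathfrak{k}^+$ into its antisymmetric part $M^-$ (on which $\sigma=-1$) and the symmetric part $\mathfrak{k}^+ = \mathfrak{k}\cap\ker(\sigma-1)$. Since $z\in M^-$ and $w\in\mathfrak{k}^+$, the desired containment follows once I establish two cyclicity statements: $M^-$ is generated by $z$ and $\mathfrak{k}^+$ is generated by $w$ as $L_1$-modules. Writing $\Lambda=\Q[x_1,x_2]^{\sigma}$ for the symmetric polynomials, one checks $M^- = z\cdot\Lambda$ and $M^+ = x_1x_2\cdot\Lambda$, so on $M^-$ the action becomes the twisted action $s\mapsto \xi(s) + t_\xi s$ of $\xi\in L_1$ on $\Lambda$, where the multiplier is determined by $\xi(z) = t_\xi z$ (for instance $x^2\frac{d}{dx}$ gives $t = 2(x_1+x_2)$), while $\mathfrak{k}^+$ sits inside $M^+$ as the kernel of $\varepsilon$ and admits a parallel reduction.

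The main obstacle is exactly these two cyclicity statements; everything before them is formal. Concretely, I expect to argue by induction on total degree that the images of the degree-$m$ and degree-$(m-1)$ pieces under $\varepsilon(x_1x_2)=x^2\frac{d}{dx}$ and $\varepsilon(x_1^2x_2)=x^3\frac{d}{dx}$ already span the degree-$(m+1)$ piece of each summand. The delicate point is that a single raising operator falls one dimension short in each degree, since $\dim\mathfrak{k}_{m+1} = \dim\mathfrak{k}_m + 1$, so one must combine operators coming from two consecutive degrees and verify that the resulting vectors are linearly independent. This is a finite rank computation in each degree, which I would make uniform either through the twisted-polynomial-module description above (showing the $L_1$-orbit of $1\in\Lambda$ exhausts $\Q[x_1,x_2]^{\sigma}$) or by exhibiting an explicit triangularity of the relevant matrices in the monomial basis $x_1^ax_2^b$.
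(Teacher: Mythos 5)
Your formal reduction is correct and genuinely different in organization from the paper's proof. Using Lemma \ref{3bracket} you rightly observe that it suffices to show $\mathfrak{k}=\ker\varepsilon\subseteq\mathfrak{a}$ (since $\varepsilon(x_1x_2)=x^2\frac{d}{dx}$ and $\varepsilon(x_1^2x_2)=x^3\frac{d}{dx}$ generate $L_1$), that the adjoint action on the abelian kernel (Corollary \ref{3abel}) factors through $\varepsilon$, making $\mathfrak{k}$ an $L_1$-module with $\mathfrak{a}\cap\mathfrak{k}$ a submodule, and you split via the involution $x_1\leftrightarrow x_2$ --- the same $\iota$ the paper only deploys later, in Section 7, for Theorem \ref{main}. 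Your membership checks are also right: $z=x_1^2x_2-x_1x_2^2\in\mathfrak{a}\cap\mathfrak{k}$ trivially, and $2x_1^2x_2^2=[x_1x_2,x_1^2x_2]+[x_1x_2,x_1x_2^2]$ agrees with the paper's displayed brackets, so $w=x_1^3x_2+x_1x_2^3-2x_1^2x_2^2\in\mathfrak{a}\cap\mathfrak{k}$ as well. In contrast, the paper never isolates the kernel as a module: it works degree by degree in the whole of $\L_1(\Q\Par_2)$, showing the bracket map $\delta_{1,m}\colon C_2(\L(\Q\Par_2))_{(m-1)}\to C_1(\L(\Q\Par_2))_{(m-1)}$ is surjective for $m\geq 5$.

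However, there is a genuine gap, and it sits exactly where you flag it: the two cyclicity statements ($M^-$ generated by $z$, $\mathfrak{k}^+$ by $w$, as $L_1$-modules) are asserted with only a strategy (``I expect to argue by induction\dots'', ``either through\dots or by\dots''), not a proof, and they carry the entire content of the theorem --- everything preceding them is formal, as you yourself say. They amount to a spanning claim in every degree $m$, and this is precisely where the paper expends essentially all of its effort: explicit bracket formulas, the matrix $A_5$, and an $(m-1)\times(m-1)$ submatrix $A'_m$ with $\det A'_m=\frac{1}{6}(-1)^{m+1}(m-3)!\,m(m-1)(2m-7)\neq 0$. Note that this determinant is not ``obviously'' nonzero (witness the near-degenerate factor $2m-7$), so your hoped-for ``explicit triangularity of the relevant matrices in the monomial basis'' is optimistic; the analogous matrices in your twisted-module picture are not triangular, and a uniform nonvanishing argument valid for all $m$ must actually be produced. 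A secondary inaccuracy: your count $\dim\mathfrak{k}_{m+1}=\dim\mathfrak{k}_m+1$ holds for the total kernel, but after splitting by the involution the dimension increments within $M^-$ and within $\mathfrak{k}^+$ alternate between $0$ and $1$, so the bookkeeping for ``combining operators from two consecutive degrees'' (images of $e_1$ from degree $m-1$ and of $e_2$ from degree $m-2$, say) has to be redone per summand; the plan has the right shape and checks out in low degrees, but until the rank computation is carried out for general $m$, the proposal is a clean reduction plus an unproven core rather than a proof.
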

\begin{proof}
It should be remarked $\dim C_1(\L(\Q\Par_2))_{(m-1)} 
= \dim \Q\Par_2((m)) = m-1$. 
It is obvious that $x_1x_2, x_x^2x_2$ and $x_1x_2^2$ 
are not in the derived ideal 
$[\L_1(\Q\Par_2),\L_1(\Q\Par_2)]$.
Next we compute brackets which take values in $\Q\Par_2((4))$. 
We obtain
\begin{align*}
&[x_1x_2, x_1^2x_2]=x_1^3x_2+x_1^2x_2^2-x_1x_2^3, \\
&[x_1x_2, x_1x_2^2]=-x_1^3x_2+x_1^2x_2^2+x_1x_2^3. 
\end{align*}
Hence $x_1^3x_2+x_1x_2^3$ is not in the derived ideal.
\par

On the other hand, if $m\geq 5$, any elements of $\Q\Par_2((m-1))$ 
can be obtained by
repetition of Lie brackets. In fact, if $m=5$, then
\begin{align*}
&[x_1x_2, x_1^3x_2]=2x_1^4x_2+x_1^3x_2^2-x_1x_2^4, \\
&[x_1x_2, x_1^2x_2^2]=-x_1^4x_2+2x_1^3x_2^2+2x_1^2x_2^3-x_1x_2^4, \\
&[x_1x_2, x_1x_2^3]=-x_1^4x_2+x_1^2x_2^3+2x_1x_2^4, \\
&[x_1^2x_2, x_1x_2^2]=-2x_1^4x_2+x_1^3x_2^2-x_1^2x_2^3+2x_1x_2^4,
\end{align*}
and thus the boundary map
$$
\delta _{1, 5}\colon C_2(\Lambda (\Par _2))_{(4)}\to 
C_1(\Lambda (\Par_2))_{(4)}
$$
can be represented by the matrix
$$ 
A_5 =\begin{pmatrix}
2 & 1 & 0 & -1 \\
-1 & 2 & 2 & -1 \\
-1 & 0 & 1 & 2 \\
-2 & 1 & 1 & 2 \\
\end{pmatrix} .
$$

Hence $\det A_5\neq 0$.
Further if $m\geq 6$, then
\begin{align*}
&[x_1x_2, x_1^{m-k-1}x_2^k]
=-x_1^{m-1}x_2+(m-k-1)x_1^{m-k}x_2^k+kx_1^{m-k-1}x_2^{k+2}-x_1x_2^{m-1} \\
&\qquad\qquad\qquad\qquad\qquad\qquad\qquad\qquad\qquad\qquad\qquad\qquad 
\text{(for any }2\leq k\leq m-3\text{)}, \\ 
&[x_1x_2, x_1x_2^{m-2}]=-x_1^{m-1}x_2+x_1^2x_2^{m-2}+(m-3)x_1x_2^{m-1}, \\
&[x_1^2x_2, x_1x_2^{m-3}]
=-2x_1^{m-1}x_2+x_1^3x_2^{m-3}-x_1^2x_2^{m-2}+(m-3)x_1x_2^{m-1}, \\
&[x_1^2x_2, x_1^2x_2^{m-4}]
=-2x_1^{m-1}x_2+2x_1^4x_2^{m-4}+(m-5)x_1^2x_2^{m-2} 
\end{align*}
and thus a matrix representation $A_m$ of the boundary map
$$
\delta _{1, m}\colon C_2(\Lambda (\Par_2))_{(m-1)}\to 
C_1(\Lambda (\Par_2))_{(m-1)}
$$
has the $(m-1)\times (m-1)$ submatrix
$$
A'_m=\begin{pmatrix}
-1 & m-3 & 2 & & & & & -1 \\
-1 &  & m-4 & 3 & & & O & -1 \\
\vdots & & & \ddots & \ddots & & & \vdots \\
-1 & & & & 3 & m-4 &  & -1 \\
-1 & O & & & & 2 & m-3 & -1 \\
-1 & &  & & & & 1 & m-3 \\
-2 & 0 & \dots & 0 & 0 & 1 & -1 & m-3 \\
-2 & 0 & \dots & 0 & 2 & 0 & m-5 & 0 \\
\end{pmatrix}.
$$
Hence 
\begin{align*}
\det A'_m&=\frac{1}{6}(-1)^{m}(m-3)! \cdot \det
\begin{pmatrix}
-1 & 3 & m-4 & 0 & -1 \\
-1 & 0 & 2 & m-3 & -1 \\
-1 & 0 & 0 & 1 & m-3 \\
-2 & 0 & 1 & -1 & m-3 \\
-2 & 2 & 0 & m-5 & 0 \\
\end{pmatrix} \\
&=\frac{1}{6}(-1)^{m+1}(m-3)!\cdot m(m-1)(2m-7) \\
&\neq 0.
\end{align*}
Consequently, the boundary map $\delta_{1, m}$ is surjective 
if $m\geq 5$ and this concludes the
proof of Theorem \ref{gen-par}.
\end{proof}

\begin{cor}
$$ 
H_1(\Lambda_1 (\Q\Par_2) ;\Q)\cong\Q^4.
$$
\end{cor}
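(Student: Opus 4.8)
The plan is to extract the corollary directly from the eigenspace-by-eigenspace analysis already contained in the proof of Theorem \ref{gen-par}. First I would recall that for any Lie algebra $\mathfrak{g}$ the first homology is the abelianization, $H_1(\mathfrak{g};\Q) = \mathfrak{g}/[\mathfrak{g},\mathfrak{g}]$, since the differential $C_1 \to C_0$ of the standard complex vanishes. Applying this to $\mathfrak{g} = \Lambda_1(\Q\Par_2)$ and using the decomposition of $C_*(\Lambda_1(\Q\Par_2))$ into $\ad e_0$-eigenspaces from \S2, I get
\[
H_1(\Lambda_1(\Q\Par_2);\Q) = \bigoplus_{l\geq 1} H_1(\Lambda_1(\Q\Par_2))_{(l)}, \qquad H_1(\Lambda_1(\Q\Par_2))_{(l)} = \operatorname{coker}\delta_{1,l+1},
\]
where $\delta_{1,l+1}$ is the boundary map whose target is $C_1(\Lambda(\Par_2))_{(l)} = \Q\Par_2((l+1))$, a space of dimension $l$. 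Thus the computation reduces to identifying the cokernel of $\delta_{1,l+1}$ in each eigenvalue $l \geq 1$.

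For $l \geq 4$, i.e. $m = l+1 \geq 5$, the proof of Theorem \ref{gen-par} shows that $\delta_{1,m}$ is surjective, so the corresponding summand vanishes. Hence only the eigenvalues $l = 1, 2, 3$ can contribute. In eigenvalue $l = 1$ no bracket of two elements of $\Lambda_1(\Q\Par_2)$ lands in $\Q\Par_2((2)) = \Q\, x_1x_2$, because every such bracket has eigenvalue at least $2$; so this summand is $\Q$, spanned by $x_1x_2$. In eigenvalue $l = 2$ the only bracket available is $[x_1x_2, x_1x_2] = 0$, so the whole two-dimensional space $\Q\Par_2((3))$ survives and the summand is $\Q^2$, spanned by $x_1^2x_2$ and $x_1x_2^2$. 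In eigenvalue $l = 3$ the image of $\delta_{1,4}$ is spanned by the two brackets $[x_1x_2, x_1^2x_2]$ and $[x_1x_2, x_1x_2^2]$ computed in the proof of Theorem \ref{gen-par}; these are linearly independent in the three-dimensional space $\Q\Par_2((4))$, so the summand is $\Q$, with the class of $x_1^3x_2 + x_1x_2^3$ as a representative.

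Summing the contributions then yields $H_1(\Lambda_1(\Q\Par_2);\Q) \cong \Q \oplus \Q^2 \oplus \Q \cong \Q^4$, matching the four generators of Theorem \ref{gen-par}. I do not anticipate any real obstacle here: every bracket needed has already been evaluated in the proof of the theorem, and the surjectivity in high eigenvalues is exactly what that proof establishes. The only point deserving a line of care is that the four surviving classes sit in the distinct eigenvalues $1, 2, 2, 3$, so their linear independence across the grading is automatic once each is shown to be nonzero in its own summand.
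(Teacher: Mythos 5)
Your proposal is correct and is essentially the paper's own (implicit) argument: the corollary is stated as an immediate consequence of the proof of Theorem \ref{gen-par}, whose eigenspace computations — surjectivity of $\delta_{1,m}$ for $m\geq 5$ together with the explicit brackets in degrees $2$, $3$, $4$ — you reconstruct faithfully, including the correct identification of the four surviving classes $x_1x_2$, $x_1^2x_2$, $x_1x_2^2$, $x_1^3x_2+x_1x_2^3$ in eigenvalues $1,2,2,3$.
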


\begin{cor}
The Lie algebra $\Lambda(\Q\Par_2)$ is generated by $1$, 
$x_1x_2, x_x^2x_2$, $x_1x_2^2$, and $x_1^3x_2+x_1x_2^3$.
\end{cor}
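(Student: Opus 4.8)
The plan is to deduce this directly from Theorem \ref{gen-par} together with the grading of $\Lambda(\Q\Par_2)$ by the $\ad e_0$-eigenvalue. Since $\Par_2((0))$ is empty and $\Par_2((1)) = \{1\}$, I would first record the vector space decomposition
$$
\Lambda(\Q\Par_2) = \Lambda_0(\Q\Par_2) = \Q\cdot 1 \oplus \Lambda_1(\Q\Par_2),
$$
in which $\Q\cdot 1$ is the $0$-eigenspace of $\ad e_0$ and $\Lambda_1(\Q\Par_2) = \bigoplus_{m\geq 2}\Q\Par_2((m))$ collects the eigenspaces of positive eigenvalue.

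Next I would observe that the four elements $x_1x_2$, $x_1^2x_2$, $x_1x_2^2$ and $x_1^3x_2 + x_1x_2^3$ all lie in $\Lambda_1(\Q\Par_2)$, and that $\Lambda_1(\Q\Par_2)$ is itself a Lie subalgebra of $\Lambda(\Q\Par_2)$. Consequently the Lie subalgebra of $\Lambda(\Q\Par_2)$ generated by these four elements coincides with the Lie subalgebra of $\Lambda_1(\Q\Par_2)$ that they generate, and by Theorem \ref{gen-par} this is all of $\Lambda_1(\Q\Par_2)$.

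It then remains to adjoin the unit $1 = e_0$. The Lie subalgebra $S$ generated by $\{1, x_1x_2, x_1^2x_2, x_1x_2^2, x_1^3x_2 + x_1x_2^3\}$ contains both $\Q\cdot 1$ and, by the previous step, all of $\Lambda_1(\Q\Par_2)$; hence it contains their direct sum, which is the whole of $\Lambda(\Q\Par_2)$. Since $S$ is by construction a subspace of $\Lambda(\Q\Par_2)$, this forces $S = \Lambda(\Q\Par_2)$, proving the corollary.

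There is essentially no genuine obstacle here: the statement is a formal consequence of Theorem \ref{gen-par}. The only point deserving a moment's care is the low-degree bookkeeping, namely that $\Par_2((0))$ is empty and $\Par_2((1)) = \{1\}$, so the degree-$0$ part of $\Lambda(\Q\Par_2)$ is exactly the one-dimensional space $\Q\cdot 1$. This summand is not contained in the subalgebra generated by the four elements of positive degree, and so must be supplied by adjoining $1$. One may also note that $\ad 1$ acts on each $\Q\Par_2((m))$ as multiplication by $m-1$, so brackets involving $1$ never leave these homogeneous pieces; this confirms that adjoining $1$ contributes precisely the summand $\Q\cdot 1$ and nothing further.
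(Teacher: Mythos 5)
Your proof is correct and is exactly the deduction the paper intends: the corollary is stated without proof as an immediate consequence of Theorem \ref{gen-par}, and your argument (the decomposition $\Lambda(\Q\Par_2)=\Q\cdot 1\oplus\Lambda_1(\Q\Par_2)$, generation of $\Lambda_1(\Q\Par_2)$ by the four positive-degree elements, then adjoining the unit) supplies precisely the routine bookkeeping the authors omitted. Your closing observation that $\ad 1$ acts on $\Q\Par_2((m))$ as multiplication by $m-1$, so that adjoining $1$ contributes only the summand $\Q\cdot 1$, matches the paper's eigenspace discussion of $\ad e_0$ in Section 2.
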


%%%%%%%%%%%%%%%%%%%%%%%%%%%%%%%%%%%%%%%%%%%%%%%%%%%%%%%%%%%%%%%
%%%%%%%%%%%%%%%%%%%%%%%%%%%%%%%%%%%%%%%%%%%%%%%%%%%%%%%%%%%%%%%%%%%%%%%%%%%%

In contrast, the following proposition holds.
\begin{prop}\label{partition-infinite}
The Lie algebra $\Lambda(\Q\Par)$ is not finitely generated. 
\end{prop}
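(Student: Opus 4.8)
The plan is to exploit the explicit description of the bracket on $\L_1(\Q\Par)$ given by Lemma \ref{3bracket}, together with a new grading on $\Q\Par((m))$ by the number of parts of a partition, which I will call the \emph{width}. For a nonzero homogeneous $v\in\Q\Par((m))$, expand $v=\sum_\mu v_\mu\,\mu$ in the monomials $\mu=\prod_{i=1}^N x_i^{a_i}$ and set $N(v):=\max\{N: v_\mu\neq0 \text{ for some monomial } \mu \text{ with exactly } N \text{ parts}\}$, with $N(0):=0$ and $N(1):=1$ for the unit. The whole argument rests on the claim that this width is non-increasing under the Lie bracket.

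First I would record that for $c\in\Q\Par((p))$ the augmentation $\varepsilon(c)$ is a scalar multiple of $x^p\frac{d}{dx}$, which acts on $\Q[x_i;i\ge1]$ by the diagonal field $\sum_i x_i^p\frac{\partial}{\partial x_i}$. This operator sends every monomial with exactly $N$ parts to a combination of monomials again having exactly $N$ parts, since it only raises one already-present exponent at a time and never creates or removes a variable. Hence $N\bigl(\varepsilon(c)(d)\bigr)\le N(d)$ for all homogeneous $c,d$. Combining this with Lemma \ref{3bracket}, which gives $[c,d]=\varepsilon(c)(d)-\varepsilon(d)(c)$ on $\L_1(\Q\Par)$, and with $[1,v]=(m-1)v$ for $v\in\Q\Par((m))$, I obtain the fundamental estimate
\[
N([c,d])\le\max\{N(c),N(d)\}
\]
for all homogeneous $c,d\in\L(\Q\Par)$.

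With the width estimate in hand the proposition follows quickly. Any finite generating set of $\L(\Q\Par)$ has bounded width, say $\le N_0$. Since width does not increase under a single bracket, and since the width of a linear combination is at most the maximum of the widths of its terms, an induction on bracket length shows that the entire Lie subalgebra generated by such a set consists of elements of width $\le N_0$. But the partition $x_1x_2\cdots x_m\in\Q\Par((m))\subset\L(\Q\Par)$ has width $m$, so for $m>N_0$ it cannot lie in the generated subalgebra, a contradiction. Therefore $\L(\Q\Par)$ is not finitely generated.

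I expect the main obstacle to be isolating the correct invariant and verifying its monotonicity cleanly rather than any hard computation. One must check that no cancellation across distinct widths can occur: monomials with different numbers of parts are linearly independent and live in different summands, so the top-width component of $\varepsilon(c)(d)$ is exactly the image of the top-width component of $d$, which rules out any accidental increase of width. One must also treat the unit $1\in\Par((1))$ by hand, since Lemma \ref{3bracket} is stated only for $\L_1(\Q\Par)$. It is worth noting that the same estimate clarifies why passing to a bounded arity is essential: in $\Par_n$ every element has width $\le n$, so the obstruction disappears entirely, in agreement with the finite generation of $\L(\Q\Par_2)$ established in Theorem \ref{gen-par}.
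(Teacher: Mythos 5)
Your proof is correct and is essentially the paper's own argument: your ``width $\le N_0$'' subspace is precisely $\Lambda(\Q\Par_{N_0})$, and the paper simply observes that any finite generating set lies in some $\Lambda(\Q\Par_m)$, which is a proper Lie subalgebra because $\Par_m$ is a suboperad (its compositions never increase the number of parts). Your derivation of width-monotonicity from Lemma \ref{3bracket} merely unpacks that closure property, so the two proofs coincide in substance.
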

\begin{proof}
Assume that $\Lambda(\Q\Par)$ is finitely generated.
Then there exists a sufficiently large $m\geq 2$ 
so that $\Lambda(\Q\Par)$ is generated by $\bigoplus_{j=0}^m \Q\Par((j))$.
However, the Lie subalgebra $\Lambda (\Q\Par_m)$ of $\Lambda (\Q\Par)$ 
contains $\bigoplus_{j=0}^m \Q\Par ((j))$
although it doesn't generate $\Lambda(\Q\Par)$.
Thus we have a contradiction.
\end{proof}
Since $\nu\colon\Tree\to\Par$ induces a surjective homomorphism 
of Lie algebras, we directly have the first half of the following corollary.
The rest is proved by an argument similar to the proof of 
Theorem \ref{gen-tree}.

\begin{cor}\label{tree-infinite}
The Lie algebra $\Lambda(\Q\Tree)$ is not finitely generated.
Furthermore, neither the Lie algebra $\Lambda(\Q\Tree^-)$ is.
\end{cor}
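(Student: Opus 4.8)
The two assertions are handled separately. For $\Lambda(\Q\Tree)$ I would invoke that $\nu\colon\Tree\to\Par$ induces a \emph{surjective} homomorphism of Lie algebras $\Lambda(\Q\Tree)\twoheadrightarrow\Lambda(\Q\Par)$. Since the homomorphic image of a finitely generated Lie algebra is finitely generated (the images of a finite generating set generate the target), finite generation of $\Lambda(\Q\Tree)$ would force finite generation of $\Lambda(\Q\Par)$, contradicting Proposition \ref{partition-infinite}. This settles the first half at once.

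For $\Lambda(\Q\Tree^-)$ I would rerun the proof of Theorem \ref{gen-tree} with $\Tree$ in place of $\Tree_2$. Let $\mathfrak{h}_m$ be the Lie subalgebra of $\Lambda(\Q\Tree^-)$ generated by $\bigcup_{j=2}^m\Tree((j))$. The key step is the analogue of Lemma \ref{h_m}, namely that $\Q\Tree^-((0))\oplus\Q\Tree^-((1))\oplus\mathfrak{h}_m$ is a Lie subalgebra, and its proof carries over unchanged: $\ad 1$ acts as a scalar (the weight) on each $\Tree((j))$ and so preserves the subspace, while $\ad\maru$ acts as the face operator $\partial$, sending a generator $u\in\Tree((j))$ with $j\geq 3$ to $\partial u\in\Q\Tree((j-1))$ (again of generator degree) and sending the unique tree $(12)\in\Tree((2))$ to $2\cdot 1$; in the latter case the Jacobi identity \eqref{f_c} together with the collapse $f_c(\dots,1,\dots)=C\,f_{\partial_i c}(\dots)$ keeps the result inside $\Q\Tree^-((1))\oplus\mathfrak{h}_m$. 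Nothing in this argument uses binarity, so it applies to the full operad.

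Granting this, suppose $\Lambda(\Q\Tree^-)$ is finitely generated. Then for some large $m$ it is generated by $\bigoplus_{j=0}^m\Q\Tree^-((j))$, and the subalgebra property forces $\Lambda(\Q\Tree^-)=\Q\Tree^-((0))\oplus\Q\Tree^-((1))\oplus\mathfrak{h}_m$, whence $\mathfrak{h}_m=\Lambda_1(\Q\Tree)$. Exactly as in Theorem \ref{gen-tree}, each $\Q\Tree((l))$ with $l>m$ then consists of brackets of at least two generators, so $\Q\Tree((l))\subset[\Lambda_1(\Q\Tree),\Lambda_1(\Q\Tree)]$ and $H_1(\Lambda_1(\Q\Tree))_{(l-1)}=0$ for every $l>m$. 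To contradict this I would feed in the first half: writing $e_0=1\in\Tree((1))$ we have $\Lambda(\Q\Tree)=\Q e_0\oplus\Lambda_1(\Q\Tree)$, so a finite generating set of $\Lambda_1(\Q\Tree)$ together with $e_0$ would generate $\Lambda(\Q\Tree)$; hence $\Lambda_1(\Q\Tree)$ is \emph{not} finitely generated. As $\Lambda_1(\Q\Tree)=\bigoplus_{m\geq 2}\Q\Tree((m))$ is positively graded (weight $m-1\geq 1$) with finite-dimensional weight spaces, failure of finite generation is equivalent to the abelianization $H_1(\Lambda_1(\Q\Tree))$ being nonzero in infinitely many weights. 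Choosing $l>m$ with $H_1(\Lambda_1(\Q\Tree))_{(l-1)}\neq 0$ yields the contradiction, proving that $\Lambda(\Q\Tree^-)$ is not finitely generated.

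I expect the main obstacle to be the subalgebra lemma of the second paragraph: one must verify that applying $\ad\maru$ to an arbitrary iterated bracket $f_c(u_1,\dots,u_n)$ of possibly non-binary trees, and in particular the face-collapse produced by arguments $u_i=(12)$, never escapes $\Q\Tree^-((1))\oplus\mathfrak{h}_m$. The remaining ingredient, the equivalence for a positively graded Lie algebra with finite-dimensional graded pieces between non-finite-generation and non-vanishing of the abelianization in infinitely many degrees, is standard but deserves to be stated explicitly, since it is precisely what lets the first assertion drive the second without recomputing a Catalan-type estimate for the full operad.
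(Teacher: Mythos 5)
Your proposal is correct and follows the paper's own route, which is stated only in outline: the paper disposes of $\Lambda(\Q\Tree)$ exactly as you do, via the surjection $\Lambda(\Q\Tree)\to\Lambda(\Q\Par)$ induced by $\nu$ together with Proposition \ref{partition-infinite}, and for $\Lambda(\Q\Tree^-)$ it says only that the claim ``is proved by an argument similar to the proof of Theorem \ref{gen-tree}'' --- precisely the scheme you execute. Your check that Lemma \ref{h_m} is binarity-free is right and is the substantive point: the tree $c$ in $f_c$ is always binary because Lie brackets are binary, regardless of the operad, and the only properties of the arguments $u_i$ that the proof uses are that $\ad 1$ acts as a scalar on each $\Q\Tree((j))$, that $\ad\maru$ is the face operator dropping arity by one, and that $\Tree((2))=\{(12)\}$ with $(\ad\maru)(12)=2\cdot 1$ --- all of which hold in the full operad $\Tree$. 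Where you genuinely diverge is the source of the final contradiction: the paper's template would call for an analogue of Lemma \ref{catalan}, i.e.\ a counting estimate showing $H_1(\Lambda_1(\Q\Tree))_{(m)}\neq 0$ (with Schr\"oder-type numbers replacing Catalan numbers), whereas you bootstrap from the first half, using $\Lambda(\Q\Tree)=\Q e_0\oplus\Lambda_1(\Q\Tree)$ to transfer non-finite-generation to $\Lambda_1(\Q\Tree)$ and then the standard criterion that a positively graded Lie algebra with finite-dimensional weight spaces is finitely generated if and only if its abelianization vanishes in all sufficiently high weights. This substitution is sound --- you correctly flag that the graded criterion must be stated, and the finite-dimensionality of $\Q\Tree((m))$ does real work there --- and it spares you any new enumeration; the counting route would buy slightly more, namely nonvanishing of $H_1(\Lambda_1(\Q\Tree))_{(m)}$ in every degree rather than merely infinitely many. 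One simplification you could make: once the subalgebra lemma forces $\mathfrak{h}_m=\Lambda_1(\Q\Tree)$, this already exhibits $\Lambda_1(\Q\Tree)$ as generated by the finite set $\bigcup_{j=2}^m\Tree((j))$, directly contradicting the first half without passing through $H_1$ at all.
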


%%%%%%%%%%%%%%%%%%%%%%%%%%%%%%%%%%%%%%%%%%%%%%%%%%%%%%%%%%%%%%%%%%
%%%%%%%%%%%%%%%%%%%%%%%%%%%%%%%%%%%%%%%%%%%%%%%%%%%%%%%%%%%%%%%%%%%%%%%%%
%%%%%%%%%%%%%%%%%%%%%%%%%%%%%%%%%%%%%%%%%%%%%%%%%%%%%%%%%%%%%%%%%%
%%%%%%%%%%%%%%%%%%%%%%%%%%%%%%%%%%%%%%%%%%%%%%%%%%%%%%%%%%%%%%%%%%%%%%%%%

\section{The augmentation homomorphism on $\Lambda (\Q\Tree)$ 
has no splitting}

Let $\varepsilon$ and $\varepsilon_1$ denote the augmentation homomorphism 
from $\Lambda
(\Q\Par)$ and $\Lambda (\Q\Tree )$ to $L_0$, respectively.  
In this section, we prove the following theorem.
\begin{theorem}\label{main}
The augmentation homomorphism $\varepsilon _1\colon 
\Lambda (\Q\Tree )\to L_0$ 
has no splitting preserving the units.
\end{theorem}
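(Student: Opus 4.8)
The plan is to reduce the statement for $\Tree$ to the corresponding statement for the nonsymmetric operad of partitions $\Par$, where Lemma~\ref{3bracket} converts the Lie bracket into an explicit differential operator, and then to exhibit an obstruction in degree $6$. First suppose $\varepsilon_1$ admits a splitting $s\colon L_0\to\L(\Q\Tree)$ preserving the units. Since $\nu\colon\Tree\to\Par$ is a morphism of nonsymmetric operads with $\nu(1)=1$, the naturality of the augmentation gives $\varepsilon\circ\nu_*=\varepsilon_1$, so $\nu_*\circ s\colon L_0\to\L(\Q\Par)$ is again a splitting preserving the units. Hence it suffices to prove that $\varepsilon\colon\L(\Q\Par)\to L_0$ has no unit-preserving splitting. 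So assume $\sigma\colon L_0\to\L(\Q\Par)$ is such a splitting and set $P_m:=\sigma(x^m\frac{d}{dx})$. Because $\sigma$ preserves the units, $\sigma(x\frac{d}{dx})=1$ and each $P_m$ lies in the $(m-1)$-eigenspace of $\ad e_0$, i.e. $P_m\in\Q\Par((m))$ with coefficient sum $\varepsilon(P_m)=1$; in particular $\dim\Q\Par((2))=1$ forces $P_2=x_1x_2$. Combining the homomorphism property with $[x^m\frac{d}{dx},x^n\frac{d}{dx}]=(n-m)x^{m+n-1}\frac{d}{dx}$ and Lemma~\ref{3bracket}, I obtain for all $m,n\geq 2$
\[
(n-m)P_{m+n-1}=\sum_{i\geq 1}x_i^m\frac{\partial}{\partial x_i}P_n-\sum_{i\geq 1}x_i^n\frac{\partial}{\partial x_i}P_m .
\]

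The next step is to pass to two variables to keep the computation finite. Let $\pi_2\colon\Q[x_i;i\geq 1]\to\Q[x_1,x_2]$ be the projection sending $x_i\mapsto 0$ for $i\geq 3$, and put $\bar P_m:=\pi_2(P_m)$. Each operator $\sum_i x_i^p\frac{\partial}{\partial x_i}$ preserves the number of distinct variables occurring in a monomial, so it intertwines with $\pi_2$; that is, $\pi_2\circ\sum_i x_i^p\frac{\partial}{\partial x_i}=D_p\circ\pi_2$ where $D_p:=x_1^p\frac{\partial}{\partial x_1}+x_2^p\frac{\partial}{\partial x_2}$. Applying $\pi_2$ to the displayed recursion shows that the $\bar P_m\in\Q[x_1,x_2]$ satisfy the same recursion with $D_p$ in place of $\sum_i x_i^p\frac{\partial}{\partial x_i}$, starting from $\bar P_2=x_1x_2$ and $\bar P_3=a\,x_1^2x_2+b\,x_1x_2^2$ for some $a,b\in\Q$; note the $x_1x_2x_3$-component of $P_3$ is killed by $\pi_2$, so $a,b$ are otherwise unconstrained.

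Finally I would use that $L_0$ is generated by $e_2=x^2\frac{d}{dx}$ and $e_3=x^3\frac{d}{dx}$. Writing $e_4=[e_2,e_3]$ and $e_5=\tfrac12[e_2,e_4]$, the element $x^6\frac{d}{dx}$ can be produced both as $\tfrac13[e_2,e_5]$ and as $[e_3,e_4]$, so any splitting must satisfy $\tfrac13[P_2,P_5]=[P_3,P_4]$ with $P_4=[P_2,P_3]$ and $P_5=\tfrac12[P_2,P_4]$. After applying $\pi_2$ and the recursion, this forces the two polynomials
\[
\tfrac13\bigl(D_2\bar P_5-D_5\bar P_2\bigr)\quad\text{and}\quad D_3\bar P_4-D_4\bar P_3
\]
to coincide. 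Computing $\bar P_4,\bar P_5$ from the recursion and taking the difference, the coefficient of $x_1^3x_2^3$ works out to $-\tfrac43$, independently of $a$ and $b$. As this is nonzero, the required relation fails, and the contradiction is reached.

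The routine part is the explicit evaluation of $\bar P_4,\bar P_5$ and of the degree-$6$ difference; the substantive points are the reduction along $\nu$, which trades the free operad of trees for the transparent polynomial model $\Par$, and the observation that $\pi_2$ intertwines the operators $\sum_i x_i^p\frac{\partial}{\partial x_i}$, confining the whole obstruction to $\Q[x_1,x_2]$. The main obstacle is to arrange matters so that a single coefficient, independent of the free parameters $a,b$ (equivalently of the choice of $P_3$, including its multi-variable part), already obstructs the splitting; locating the first genuine relation of $L_0$ among the iterated brackets of $e_2$ and $e_3$ at $x^6\frac{d}{dx}$ is exactly what makes a finite computation decisive.
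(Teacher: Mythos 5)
Your proposal is correct, and it reaches the contradiction by a genuinely different intermediate device than the paper. Both arguments share the same skeleton: reduce along $\nu$ to the partition operad, use Lemma~\ref{3bracket} to turn brackets of the would-be splitting images $P_m\in\Q\Par((m))$ into $(n-m)P_{m+n-1}=\varepsilon(P_m)(P_n)-\varepsilon(P_n)(P_m)$, and obstruct at the first relation of $L_1$ in degree $6$ (your $\tfrac13[e_2,e_5]=[e_3,e_4]$ is literally the paper's comparison of $u_5=\tfrac{1}{3!}(\ad u_1)^3u_2$ with $[u_2,[u_1,u_2]]$ after reindexing). Where you diverge is the confinement step: the paper uses the involution $\iota$ to split $\L_1(\Q\Par)=\L_1(\Q\Par)^-\rtimes\L_1(\Q\Par)^+$ — which requires Corollary~\ref{3abel} to know the projection $p$ is a Lie homomorphism — and then works in the symmetric part with a single parameter $t$; you instead project to $\Q[x_1,x_2]$ by setting $x_i=0$ for $i\geq 3$, observing that the diagonal operators $\sum_i x_i^p\partial/\partial x_i$ preserve the set of variables occurring in a monomial (true since $p\geq 2$, so no exponent can drop to zero and none is created), hence intertwine with $\pi_2$. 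Your route is leaner: it needs no semidirect-product structure and no symmetry hypothesis on $P_3$, whose three-variable component $x_1x_2x_3$ is simply killed by $\pi_2$, leaving two free parameters $a,b$ rather than the paper's one. I checked your computation: with $\bar P_2=x_1x_2$, $\bar P_3=ax_1^2x_2+bx_1x_2^2$, one gets $\bar P_4=(2a-1)x_1^3x_2+(a+b)x_1^2x_2^2+(2b-1)x_1x_2^3$ and $\bar P_5=\tfrac12\{(6a-4)x_1^4x_2+(4a+2b-1)x_1^3x_2^2+(2a+4b-1)x_1^2x_2^3+(6b-4)x_1x_2^4\}$, so the $x_1^3x_2^3$-coefficient of $\tfrac13(D_2\bar P_5-D_5\bar P_2)$ is $2a+2b-\tfrac23$ while that of $D_3\bar P_4-D_4\bar P_3$ is $2a+2b-2$; the discrepancy $\tfrac43$ (your $-\tfrac43$ is the same up to the order of subtraction) is indeed nonzero and independent of $a,b$, and it agrees with the paper's symmetric specialization $a=b=t/2$. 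Two trivial slips, neither affecting the argument: it is $L_1$, not $L_0$, that is generated by $x^2\frac{d}{dx}$ and $x^3\frac{d}{dx}$ (and you never actually need generation — only the homomorphism constraints), and the sign of the obstructing coefficient should be pinned down by fixing the direction of the difference.
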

In the proof of Theorem \ref{main}, the nonsymmetric operad $\Par$ plays 
an important role.
We denote by $\nu: \L(\Q\Tree) \to \L(\Q\Par)$ the homomorphism of 
Lie algebras induced by $\nu: \Tree \to \Par$. 
Then we have $\varepsilon_1=\varepsilon\circ\nu: 
\L(\Q\Tree) \to \L(\Q\Par) \to L_0$. Hence it suffices to prove that
$\varepsilon\colon\Lambda (\Q\Par)\to L_0$ 
has no splitting preserving the units. 
If such a splitting would exist, it must map $\Q x^m\frac{d}{dx}$ to 
$\Q\Par((m))$ for each $m \geq 2$. In fact, both of them 
are the $(m-1)$-eigenspaces 
of $\ad {e_0}^{\EQ}$ and $\ad {e_0}^{\Q\Par}$, respectively. 
\par

Let $\iota\colon\Par\to\Par$ be the involution defined by 
$$ 
\iota (x_1^{a_1}x_2^{a_2}\dots x_n^{a_n})
=x_1^{a_n}x_2^{a_{n-1}}\dots x_n^{a_1}. 
$$
Then it is obvious that $\iota$ induces an automorphism 
of the Lie algebra $\Lambda _1(\Q\Par)$.
If we denote by $\Lambda _1(\Q\Par)^\pm$ the ($\pm1$)-eigenspace 
of the involution 
$$ 
\Lambda_1(\Q\Par)^\pm=\{u\in\Lambda_1(\Q\Par)^\pm;\iota (u)=\pm u \}, 
$$
then $\Lambda_1(\Q\Par)^+$ is a Lie subalgebra and 
$[\Lambda_1(\Q\Par)^+, \Lambda_1(\Q\Par)^-] 
\subset \Lambda_1(\Q\Par)^-$. 
Since the kernel of the augmentation homomorphism $\varepsilon$ 
includes $\Lambda_1(\Q\Par)^-$, we have 
$[\Lambda_1(\Q\Par)^-, \Lambda_1(\Q\Par)^-] = 0$. 
Hence $\Lambda_1(\Q\Par)$ is the semi-direct product of 
$\Lambda_1(\Q\Par)^+$ and $\Lambda_1(\Q\Par)^-$
\begin{equation}
\Lambda_1(\Q\Par) = \Lambda_1(\Q\Par)^-\rtimes \Lambda_1(\Q\Par)^+.
\label{semi-direct}
\end{equation}
Since $\varepsilon(\Lambda_1(\Q\Par)^-) = 0$, we have a factorization
$$
\varepsilon\mid_{\Lambda _1(\Q\Par)} =\varepsilon _2\circ p: 
\Lambda _1(\Q\Par)\overset{p}\to\Lambda_1(\Q\Par)^+
\overset{\varepsilon_2}\to L_0,
$$
where $p$ is the second projection in (\ref{semi-direct}) 
and $\varepsilon_2$ is the restriction of the augmentation homomorphism to
$\Lambda_1(\Q\Par)^+$.
Therefore, in order to establish Theorem \ref{main}, it suffices to 
prove the following proposition.
\begin{prop}
The augmentation homomorphism 
$\varepsilon _2\colon \Lambda _1(\Q\Par) ^+\to L_1$ 
has no splitting 
which maps $\Q x^{m}\frac{d}{dx}$ to $\Q\Par((m))$ for each $m\geq 2$.
\end{prop}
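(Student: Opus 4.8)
The plan is to argue by contradiction. Suppose a splitting $s\colon L_1 \to \Lambda_1(\Q\Par)^+$ exists, and write $s_m := s(x^m\frac{d}{dx}) \in \Q\Par((m))$ for $m \geq 2$. Each $s_m$ is $\iota$-invariant, has augmentation $1$ (since $s$ is a section), and, because $s$ is a Lie homomorphism and $[x^p\frac{d}{dx}, x^q\frac{d}{dx}] = (q-p)x^{p+q-1}\frac{d}{dx}$, the relations $[s_p, s_q] = (q-p)\,s_{p+q-1}$ hold for all $p,q \geq 2$. As $\varepsilon(s_m) = x^m\frac{d}{dx}$ has augmentation $1$, Lemma \ref{3bracket} rewrites each bracket as the identity
\begin{equation*}
D_p(s_q) - D_q(s_p) = (q-p)\,s_{p+q-1}, \qquad D_p := \sum_{i \geq 1} x_i^p\frac{\partial}{\partial x_i},
\end{equation*}
in $\Q[x_i; i\geq 1]$, where $D_p$ is the diagonal action of $x^p\frac{d}{dx}$.

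The key step is to pass to two variables. Since $p \geq 2$, the operator $D_p$ carries a monomial to a sum of monomials using exactly the same set of variables, so it preserves the grading of $\Q\Par$ by the number of parts of a partition. Hence the projection $\rho$ onto the two-part component, which is $\Q\Par_2 = \Q\Par\cap\Q[x_1,x_2]$, commutes with every $D_p$. Putting $t_m := \rho(s_m) \in \Q\Par_2((m))$ and applying $\rho$ to the identities above yields
\begin{equation*}
D_p(t_q) - D_q(t_p) = (q-p)\,t_{p+q-1}, \qquad p,q \geq 2,
\end{equation*}
now with $D_p = x_1^p\frac{\partial}{\partial x_1} + x_2^p\frac{\partial}{\partial x_2}$ on $\Q[x_1,x_2]$. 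The $\iota$-invariance forces each $t_m$ to be symmetric in $x_1,x_2$, and because $\Q\Par_2((2)) = \Q\,x_1x_2$ while the symmetric part of $\Q\Par_2((3))$ is spanned by $x_1^2x_2 + x_1x_2^2$, we must have $t_2 = x_1x_2$ and $t_3 = \alpha(x_1^2x_2 + x_1x_2^2)$ for a single scalar $\alpha$.

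With these initial data the relations determine everything: taking $(p,q)=(2,3)$ gives $t_4 = (2\alpha-1)(x_1^3x_2 + x_1x_2^3) + 2\alpha\,x_1^2x_2^2$, and $(p,q)=(2,4)$ gives $t_5$, each polynomial in $\alpha$. The contradiction then surfaces in degree six, where $t_6$ can be computed in two ways, from $(p,q)=(2,5)$ and from $(p,q)=(3,4)$. Comparing the coefficient of $x_1^3x_2^3$ in the two expressions gives the equation $4\alpha - 2 = \tfrac{1}{3}(12\alpha - 2)$, i.e.
\begin{equation*}
12\alpha - 6 = 12\alpha - 2,
\end{equation*}
which holds for no $\alpha$. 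Thus no consistent family $\{t_m\}$, and hence no splitting, can exist, which proves the proposition and, via $\varepsilon_1 = \varepsilon\circ\nu$ and the factorization through $\varepsilon_2$, Theorem \ref{main}.

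The computations in the last paragraph are elementary and involve only degrees up to six, so the real content lies in the reduction: checking that $D_p$ preserves the part-grading and that $\rho$ therefore transports the defining relations intact to $\Q[x_1,x_2]$. I expect the one genuinely delicate point to be confirming that the resulting obstruction is \emph{independent} of the free parameter $\alpha$, rather than merely an equation that pins $\alpha$ down; only the former excludes a splitting. The fact that the two computations of the $x_1^3x_2^3$-coefficient differ by a fixed nonzero constant for every $\alpha$ is exactly what makes the obstruction unconditional.
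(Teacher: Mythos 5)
Your proof is correct, and I have verified the computations: with $t_2=x_1x_2$ and $t_3=\alpha(x_1^2x_2+x_1x_2^2)$ the projected relations force $t_4=(2\alpha-1)(x_1^3x_2+x_1x_2^3)+2\alpha x_1^2x_2^2$ and $t_5=(3\alpha-2)(x_1^4x_2+x_1x_2^4)+\tfrac{6\alpha-1}{2}(x_1^3x_2^2+x_1^2x_2^3)$, and the two computations of the $x_1^3x_2^3$-coefficient of $t_6$ give $4\alpha-2$ versus $\tfrac{1}{3}(12\alpha-2)$, which differ by the constant $\tfrac{4}{3}$ for every $\alpha$, so the obstruction is indeed unconditional as you claim. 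Structurally you are testing exactly the same pair of lifts as the paper: your chain $(2,3),(2,4),(2,5)$ is the paper's tower $e_n=\frac{1}{(n-2)!}(\ad e_1)^{n-2}(e_2)$, your relation $(3,4)$ is the paper's $e_5=[e_2,e_3]$, the contradiction lives in $\Q\Par((6))$ in both cases, and your parameter $\alpha$ is the paper's $t/2$ (your $t_4$ and your way-2 coefficient $2t-2$ agree with the two-part components of the paper's $u_3$ and of $[u_2,u_3]$). What is genuinely different, and a real simplification, is the reduction to $\Q\Par_2$: the observation that each diagonal operator $D_p$ preserves the number-of-parts grading (exponents $a_i\geq 1$ go to $a_i+p-1\geq 1$, so in fact $p\geq 1$ already suffices), so the projection $\rho$ transports the splitting identities intact into $\Q[x_1,x_2]$. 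A point handled correctly but worth making explicit: $\rho$ is \emph{not} a homomorphism of Lie algebras, since $\varepsilon(\rho(c))\neq\varepsilon(c)$ in general; your argument avoids needing this because you project only the identities $D_p(s_q)-D_q(s_p)=(q-p)s_{p+q-1}$, whose operators $D_p$ are pinned down by the section property $\varepsilon(s_p)=x^p\frac{d}{dx}$ via Lemma \ref{3bracket}, not by the augmentations of the projected elements. The paper's route buys an explicit exhibition of the full elements $u_3,u_4,u_5$ and $[u_2,u_3]$ in $\Q[x_i;i\geq 1]$, with the mismatch visible in several coefficients at once; your route buys a much lighter computation with no three-variable monomials and a single coefficient to compare, and shows as a bonus that the obstruction already survives projection to the binary suboperad $\Par_2$.
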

\begin{proof}
We assume that there exists a splitting $s$ which maps $\Q x^{m}\frac{d}{dx}$ 
to $\Q\Par((m))$ for each $m\geq 2$.
We denote
$$
e_i=x^{i+1}\frac{d}{dx} \in L_1
$$
for $i \geq 1$. 
Recall that $L_1$ is generated by $e_1$ and $e_2$.
In fact, $e_n$ is obtained from $e_1$ and $e_2$ by
\[ e_n =\frac{1}{(n-2)!}(\ad e_1)^{n-2}(e_2), \]
for $n\geq 3$.
Therefore the splitting $s$ is uniquely determined 
by its values of $e_1$ and $e_2$.
Since $\L_1(\Q\Par)^+\cap \Q\Par((2))$ is generated by $x_1x_2$ and 
$\L_1(\Q\Par)^+\cap \Q\Par((3))$ by $x_1^2x_2+x_1x_2^2$ and $x_1x_2x_3$, 
the value $u_1$ of $e_1$ by $s$ must be $x_1x_2$ and $u_2$ of $e_2$ 
must have the form
\[ u_2=\frac{t}{2}(x_1^2x_2+x_1x_2^2)+(1-t)x_1x_2x_3. \]
If we define $u_n$ by
\[ u_n=\frac{1}{(n-2)!}(\ad u_1)^{n-2}(u_2) \]
for $n\geq 3$,
then the equation $u_n=s(e_n)$ must hold also for $n\geq 3$.
In particular, $u_5$ must coincide with $[u_2, u_3]$ since $e_5=[e_2, e_3]$.
To prove that $u_5\neq [u_2, u_3]$,  
we compute $u_3$, $u_4$, and $u_5$ explicitly.
Then we obtain
\begin{align*}
u_3&=[x_1x_2, \frac{t}{2}(x_1^2x_2+x_1x_2^2)+(1-t)x_1x_2x_3] \\
&=tx_1^2x_2^2-(1-t)(x_1^3x_2+x_1x_2^3)+(1-t)x_1x_2^2x_3
+(1-t)(x_1^2x_2x_3+x_1x_2x_3^2), \\
u_4&=\frac{1}{2}[x_1x_2, u_3] \\
&=\frac{-1+3t}{2}(x_1^3x_2^2+x_1^2x_2^3)
+\frac{-4+3t}{2}(x_1^4x_2+x_1x_2^4) \\
&\quad +(1-t)\{ x_1x_2^3x_3+x_1^2x_2x_3^2
+(x_1^2x_2^2x_3+x_1x_2^2x_3^2)+(x_1^3x_2x_3+x_1x_2x_3^3)\} ,
\end{align*}
and
\begin{align*}
u_5&=\frac{1}{6}[x_1x_2, u_4] \\
&=(2t-3)x_1^3x_2^3
+\left( 2t-\frac{7}{6}\right)
(x_1^4x_2^2+x_1^2x_2^4)+(2t-3)(x_1^5x_2+x_1x_2^5) \\
&\quad +\frac{1-t}{3}\{ (x_1x_2^3x_3^2+x_1^2x_2^3x_3)
+3x_1x_2^4x_3+3x_1^2x_2^2x_3^2+2(x_1^2x_2^3x_3+x_1x_2^3x_3^2) \\
&\qquad\qquad +3(x_1^3x_2x_3^2+x_1^2x_2x_3^3)
+3(x_1^3x_2^2x_3+x_1x_2^2x_3^3)+3(x_1^4x_2x_3+x_1x_2x_3^4)\} .
\end{align*}
On the other hand, 
\begin{align*}
[u_2, u_3]&=[\frac{t}{2}(x_1^2x_2+x_1x_2^2)+(1-t)x_1x_2x_3, u_3] \\
&=-2(1-t)x_1^3x_2^3+\frac{5}{2}t(1-t)(x_1^4x_2+x_1^2x_2^4)
+(t^2+t-3)(x_1^5x_2+x_1x_2^5) \\
&\quad +(1-t)\{ x_1x_2^4x_3+(x_1^2x_2x_3^3+x_1^3x_2x_3^2)
+(x_1^2x_2^3x_3+x_1x_2^3x_3^2) \\
&\qquad\qquad +(x_1^3x_2^2x_3+x_1x_2^2x_3^3)+(x_1^4x_2x_3+x_1x_2x_3^4)\} .
\end{align*}
Thus we have $u_5\neq [u_2, u_3]$, 
which contradicts $s$ is a homomorphism of Lie algebras. 
This completes the proof. 
\end{proof}
%%%%%%%%%%%%%%%%%%%%%%%%%%%%%%%%%%%%%%%%%%%%%%%%%%%%%%%%%%%%%%%
%%%%%%%%%%%%%%%%%%%%%%%
%%%%%%%%%%%%%%%%%%%%%%%%%%%%%%%%%%%%%%%%%%%%%%%%%%%%%%%%%%%%%%%
%%%%%%%%%%%%%%%%%%%%%%%%%%%%%%%%%%%%%%%%%%%%%%%%%%%%%%%%%%%%%%%%%%%%%%%%%%%%

\bibliographystyle{amsplain}
%\bibliography{bibliography02.bib}

\end{document}